\documentclass[10pt]{amsart}
\usepackage{amsmath,amssymb,hyperref}
\usepackage{cite,graphicx}
\usepackage{xcolor}
\usepackage{verbatim} 
\usepackage{enumerate}
\usepackage[noend]{algpseudocode}
\usepackage{algorithm}

\title{Linear and Rational Factorization of Tropical Polynomials}
\author{Bo Lin}
\address{Department of Mathematics, University of Texas at Austin, TX 78712, USA, and School of Mathematics, Georgia Institute of Technology, GA 30332, USA}
\email{linbomath@gmail.com}
\author{Ngoc Mai Tran}
\address{Department of Mathematics, University of Texas at Austin, TX 78712, USA, and the Hausdorff Center for Mathematics, Bonn 53115, Germany}
\email{ntran@math.utexas.edu}

\theoremstyle{plain}
\newtheorem{theorem}{Theorem}
\newtheorem{lemma}[theorem]{Lemma}
\newtheorem{corollary}[theorem]{Corollary}
\newtheorem{conjecture}[theorem]{Conjecture}
\newtheorem{proposition}[theorem]{Proposition}

\theoremstyle{definition}
\newtheorem{remark}[theorem]{Remark}
\newtheorem{definition}[theorem]{Definition}
\newtheorem{example}[theorem]{Example}
\newtheorem{open}[theorem]{Open question}

\newcommand{\N}{\mathbb{N}}
\newcommand{\R}{\mathbb{R}}
\newcommand{\Z}{\mathbb{Z}}
\newcommand{\B}{\mathcal{B}}

\newcommand{\T}{\mathcal{T}}
\renewcommand{\S}{\mathcal{S}}
\DeclareMathOperator{\spanv}{span}

\DeclareMathOperator{\NS}{\mathbb{N}[\mathcal{S}]}
\DeclareMathOperator{\ZS}{\mathbb{Z}[\mathcal{S}]}
\DeclareMathOperator{\ES}{\mathbb{E}[\mathcal{S}]}

\DeclareMathOperator{\face}{face}

\DeclareMathOperator{\argmax}{argmax}

\DeclareMathOperator{\conv}{conv}
\DeclareMathOperator{\Ima}{Im}
\DeclareMathOperator{\newt}{Newt}

\DeclareMathOperator{\Newt}{Newt}
\setcounter{MaxMatrixCols}{20}
\pdfsuppresswarningpagegroup=1

\begin{document}
\begin{abstract}
Already for bivariate tropical polynomials, factorization is an NP-Complete problem. 
In this paper, we give an efficient algorithm for factorization and rational factorization of a rich class of tropical polynomials in $n$ variables. Special families of these polynomials have appeared in economics, discrete convex analysis, and combinatorics.
Our theorems rely on an intrinsic characterization of regular mixed subdivisions of integral polytopes, and lead to many open problems of interest in discrete geometry.
\smallskip
\noindent \textbf{Keywords.} Tropical polynomials, factorization, rational factorization, $M$-convex, $L$-convex, generalized permutohedra, Legendre transform, Minkowski sum \\
\textbf{2010 MSC.} 13P05, 14T05, 52B20
\end{abstract}
\maketitle

Consider the max-plus tropical algebra $(\R, \odot, \oplus)$, $a \odot b = a + b$, $a \oplus b = \max(a,b)$. A unit $f$ is a tropical polynomial inducing trivial regular subdivision of its Newton polytope $\newt(f)$. For a set of lattice polytopes $\S$ in $\R^n$, an $\S$-unit $f$ is a unit where $\newt(f)$ is a translation of some polytope in $\S$. We say that an $n$-variate polynomial $f$ is $\S$-factorizable if it equals a product of $\S$-units. The set of $\S$-factorizable polynomials $\NS$ is a monoid. Deciding if a given polynomial $f$ is $\S$-factorizable is an instance of the classic factorization problem in tropical geometry~\cite{Maclagan15,Speyer09}, which remains open except for univariate polynomials \cite{grigg2007factorization,kim2005factorization}. 

By the Cayley trick \cite{Sturmfels94}, this problem is equivalent to the following question on regular mixed subdivisions. Fix a set of lattice polytopes $\S$ in $\Z^n$, and let $\Delta$ be a regular subdivision of a certain Newton polytope $P$. Is $\Delta$ a mixed regular subdivision with respect to some sequence of polytopes in $\S$? Computing and enumerating regular mixed subdivisions is a central problem polyhedral geometry \cite{detriangulations}, however, this problem too seems open. There are two layers of difficulties here. First, consider the special cases where $\Delta$ is a trivial subdivision. Then $\Delta$ is mixed if and only if $P$ can be decomposed as the Minkowski sum of some sequence of polytopes in $\S$. For $n \geq 2$, Minkowski factorization of a polytope is an instance of subset sum, and thus is NP-Complete \cite{gao2001decomposition,Tiwary2008}. However, there are many algorithms one could apply and efficient shortcuts in special cases \cite{fukuda2004zonotope,fukuda2005computing,gritzmann1993minkowski,mount1991combinatorial,weibel2007minkowski}. 
Now, consider the general case, when $\Delta$ is not a trivial subdivision. Suppose that we have an oracle that can not only decide if $P$ can be written as such a Minkowski sum, but also give us the decomposition when it is possible. A necessary condition for $\Delta$ to be mixed is that each cell of $\Delta$ must be factorizable as a Minkowski sum of faces of some sequence polytopes in $\S$. We stress that this sequence could be non-unique, and it could differ from cell to cell. In this case, we say that $\Delta$ is \emph{locally factorizable}. Local factorization can be checked using the oracle. However, this is not sufficient to imply that $\Delta$ is a regular mixed subdivision in general. Being a mixed subdivision is a global condition: it requires that the cells must have a consistent mixed labels, in other words, the same sequence of polytopes and their facets must factorize all of the cells in $\Delta$. 

This paper gives a large class of polytopes $\S$ such that if $\Delta$ is locally factorizable, then it is a mixed subdivision to a unique sequence of polytopes in $\S$, up to permutation. With this property, deciding if a given polynomial is $\S$-factorizable reduces to a series of independent Minkowski decomposition problems. In particular, our result supplies a computational shortcut to verifying regular mixed subdivisions. 
 
\begin{theorem}[Local Factorization]\label{thm:main}
Let $\S$ be a set of lattice polytopes in $\R^n$. If $\S$ is a positive basis, then $\NS$ has unique and local factorization. In other words, if $f$ is a tropical polynomial such that each cell in its regular subdivision $\Delta_f$ is a Minkowski sum of some polytopes in $\S$, then $f$ admits a unique minimal factorization into a tropical product of $\S$-unit polynomials.
\end{theorem}

The name `positive basis' comes from the fact that $\mathcal{S}$ is associated with a basis of a certain vector space, with an extra orientability condition (cf. Definition \ref{def:positive.basis}). Positive bases are simple to construct and verify (cf. Section \ref{sec:algorithm}). As a result, Theorem \ref{thm:main} applies to a wide class of polynomials, with a number of interesting consequences. 

The second main result of our paper considers the problem of rational factorization.  For $n = 1$, rational factorization of tropical polynomials have been studied in \cite{tsai2012working, Korhonen15}, in connections to tropical meromorphic functions. Rational tropical polynomials form a rich class of functions that have appeared in a variety of applications: unirational varieties \cite{Draisma13}, ultra discrete equations \cite{Korhonen15}, auction theory \cite{BaldwinSlides} and topological data analysis \cite{Verovsek16}. They are equivalent to deep neural networks with ReLU activation functions and integer weight coefficients \cite{zhang2018tropical}. Unit tropical polynomials correspond to neural networks with linear decision functions, the simplest class. Rationally factorizable tropical polynomials is the subset of neural networks that can be expressed as the \emph{sum} of independent linear decision functions. To compute the rational factorization of a tropical polynomial is to transform a neural network into a much simpler description. Knowing how and when we can do this is an important step towards understanding deep neural networks. 

Formally, say that $f$ is rationally factorizable with respect to $\S$, or $\S$-rational, if $f \odot g \in \NS$ for some $g \in \mathbb{N}[\S']$ for some set of lattice polytopes $\S'$. Say that it is strong $\S$-rational if $f \odot g \in \NS$ for some $g \in \NS$. The set of $\S$-rationals $\ES$ and the set of strong $\S$-rationals $\ZS$ are both monoids, and they are much richer than $\NS$. 
In terms of cells of $\Delta_f$, the extension from factorization to rational factorization is the extension from Minkowski sums to signed Minkowski sums, a useful operation in combinatorial geometry \cite{Ardila10,gritzmann1999algorithmic}. For $f$ to be $\S$-rational (resp. strongly $\S$-rational), a necessary but not sufficient condition is that each cell of $\Delta_f$ can be expressed a signed Minkowski sum where all of the positive terms (resp. all of the terms) lie in $\S$.
The following theorem states that if $\S$ is a positive basis, then this local condition is sufficient for rational factorization. 

\begin{theorem}[Local rational factorization]\label{thm:SS}
Suppose $\S$ is a positive basis. Then $\ZS$ has unique and local factorization. 
\end{theorem}

The third main result of our paper gives a sufficient condition on $\S$ for $\ES$ to have unique and local factorization. We provide a large and important family of $\S$ with this condition, and thereby generalizes a result in auction theory of Baldwin, Golberg and Klemperer \cite{BaldwinSlides}.

\begin{theorem}[Local strong rational factorization]\label{thm:ES}
Suppose $\S$ is a positive basis. Then $\ZS = \ES$ if and only if $\S$ is full. In this case, 
$f \in \ZS$ if and only if the edges of $\Delta_f$ are parallel to integer multiples of edges ($1$-dimensional polytopes) in $\S$, as vectors.
\end{theorem}

Loosely speaking, being full means $\S$ is a maximal basis amongst all bases that have the same set of primitive edges (cf. Definition \ref{defn:basis}). 
One important example is the set $\S_{\mathbb{G}}$ which consists of all standard simplices indexed by cliques of a given graph $\mathbb{G}$ on $n$ nodes (cf. Proposition \ref{prop:graphical}). For the complete graph $K_n$, for instance, $\S_{K_n}$ consists of the standard simplex in $\R^n$ and its faces. Tropical polynomials in $\mathbb{N}[\S_{K_n}]$
define tropical hyperplane arrangements. These play an important role in defining tropical polytopes and their generalizations \cite{develin2004tropical, fink2015stiefel, joswig2016weighted}, and have applications in economics and combinatorics \cite{allamigeon2015tropicalizing,ardila2009tropical,Baldwin13,crowell2016tropical,joswig2016cayley,shiozawa2015international}. 
In discrete convex analysis, the set $\mathbb{E}[\S_{K_n}]$ is the set of $L$-convex functions whose domain is all of $\Z^n$ \cite{Murota03}. Their Legendre transforms are $M$-convex functions with compact domains. These functions feature prominently in polymatroid theory and have many interesting properties and applications, see the monographs \cite{Fujishige05,Murota03} and references therein. 

Theorem \ref{thm:ES} generalizes several statements known in the literature. The case for unit polynomials corresponds to the signed Minkowski decomposition of generalized permutohedra into standard simplices \cite{postnikov2008faces,Postnikov09,frank2014characterizing}. There are many papers devoted to their combinatorics and applications \cite{Ardila10,castillo2015ehrhart,doker2011geometry,mohammadi2016generalized,morton2009convex,postnikov2008faces,Postnikov09,postnikov2013poset}.
 Baldwin, Golberg and Klemperer \cite{BaldwinSlides} showed that $\mathbb{Z}[\S_{K_n}] = \mathbb{E}[\S_{K_n}]$ and gave an algorithm to produce a certificate of factorization. Since this case is of particular interest to auction theory, which may utilize different notations and languages, for ease of reference we restate their theorem in full here.
\begin{theorem}[\cite{BaldwinSlides}]\label{thm:baldwin}
Let $f$  be a tropical polynomial. There exists $g$ a product of linear polynomials such that $f \odot g$ is a product of linear polynomials if and only if the edges in $\Delta_f$ are parallel to $e_i - e_j$ for $i,j \in \{0,1,\ldots,n\}$, with the convention that $e_0$ is the origin.
\end{theorem}
Theorem \ref{thm:SS} strengthens the above results in two directions. First, one has unique factorization. Second, only edges that appear in $\Delta_f$ may appear in both the numerator and denominator of the rational factorization. 
\begin{theorem}\label{prop:baldwin}
Let $f$ be a tropical polynomial in $n+1$ variables such that the edges of $\Delta_f$ are parallel to $e_i - e_j$ for $i,j \in \{0,1,\ldots,n\}$. Let $\mathbb{G}(f)$ be the graph on $n+1$ nodes, where $(i,j) \in\mathbb{G}(f)$ whenever there exists an edge in $\Delta_f$ parallel to $e_i - e_j$. Then $f \in \Z[\S_{\mathbb{G}(f)}]$. That is, there is a unique way to write $f$ as a tropical rational function, the denominator and numerator are product of linear polynomials, such that no new edges are introduced. 
\end{theorem}

Our theorems are constructive. Given a set $\S$ of lattice polytopes in $\Z^n$, Algorithm~\ref{alg:positive.basis} certifies if it is a positive basis, and if it is, for any $f \in \ZS$, Algorithm~\ref{alg:gen-ZS} produces the unique minimal $g \in \NS$ such that $f \odot g \in \NS$, and Algorithm~\ref{alg:gen-NS} produces the unique factorization of a polynomial in $\NS$. Our algorithms have polynomial run time with respect to the number of polytopes in~$\S$, however, this generally scales exponential in $n$. We implement these algorithms with the softwares \textsf{Maple} and \textsf{polymake} \cite{polymake}. Codes for the examples in this paper can be found at \url{https://github.com/linbomath/TropPolyFactor}.

\textbf{Organization}. We collect background materials and discuss subtleties surrounding factorization in Section \ref{sec:background}. We discuss factorization of units in Section~\ref{sec:unit}, define positive bases and prove the main results in Section \ref{sec:main}. Section \ref{sec:positive.basis} introduces two families of full positive bases and some examples. Section \ref{sec:algorithm} and \ref{sec:example} show the various algorithms and their outputs on numerical examples. We conclude with open problems in Section~\ref{sec:summary}.

\textbf{Notations}.
For a set of vectors $B \in \R^n$, write $\mathbb{N}B$ for their span over $\mathbb{N}$, $\mathbb{Z}B$ for their span over $\mathbb{Z}$. Say that $B'$ is a basis of $B$ if $\mathbb{Z}B = \mathbb{Z}B'$, and the vectors in $B'$ are linearly independent over $\mathbb{Z}$. For polytopes $P,Q \subset \R^n$, $c \in \mathbb{N}$, let $P+Q$ denote their Minkowski sum, $c \cdot P$ denote the dilation of $P$. Say that $P$ is equivalent to~$Q$, written $P \equiv Q$, if $P = v + Q$ for some $v \in \Z^n$. If there exists a polytope $S \subset \R^n$ such that $Q+S = P$, say that $Q$ is a Minkowski summand of $P$, and write $Q \leq P$. Let $\mathcal{N}(P)$ denote the normal fan of a polytope $P$. A face of $P$ is either $P$ itself, or any polytope obtained as the set of maximizers of some linear functional over $P$. A proper face of $P$ is a face of $P$ that is neither $P$ nor one of its vertices. We denote the face of $P$ supported by a vector $v$ by $\face_v(P)$. For vectors $v,w \in \R^n$, write $v \cdot w$ for their inner product. For a matrix $H \in \Z^{r \times n}$, write $\Ima_\Z(H) := \{Hv: v \in \Z^n\}\subset \Z^r$ for the image of $\Z^n$ under $H$.

\section{Background}\label{sec:background}

\subsection{Background on tropical polynomials}
A tropical polynomial in $n$ variables is a piecewise linear, convex function $f: \R^n \to \R$ such that there exists $c_a \in \R, a \in A \subset \R^n$ where
\begin{equation}\label{eqn:f}
f(x) = \bigoplus_{a \in A} \left(c_a \odot x^{\odot a}\right) = \max_{a \in A} \left(c_a + \sum_{i=1}^na_ix_i\right) \mbox{ for all } x \in \R^n.
\end{equation}
The convex hull of $A$ is called the Newton polytope of $f$, denoted as $\newt(f)$. Points $a \in A$ are said to be lifted by the height function $a \mapsto c_a$. The Legendre transform of $f$ is the function $f^\ast: \R^n \to \R \cup \{+\infty\}$, given by
$$ f^\ast(y) = \sup_{x \in \R^n} \left(\sum_{i=1}^ny_ix_i - f(x)\right) \mbox{ for all } y \in \R^n. $$
The Legendre transform $f^\ast$ has a particularly simple interpretation: $f^\ast(y) \neq +\infty$ if and only if $y$ is in $\newt(f)$, and on this set, the graph of $f^\ast$ equals the lower convex hull of the points $\{(a,-c_a): a \in A\}$. The projection of this graph onto $\newt(f)$ is called the regular subdivision of $\newt(f)$ induced by $f$, denoted as $\Delta_f$. 
A regular subdivision $\Delta_f$ is called mixed with respect to a sequence of polytopes $(F_1, F_2, \ldots, F_r)$ if each 
cell in $\Delta_f$ equals to a Minkowski sum $\sum_{i=1}^r B_i$, where $B_i$ is a face of $F_i$ for each~$i$, and such that this representation intersects properly as a sum, meaning that if $\sigma = \sum_{i=1}^r B_i$, and $\sigma' = \sum_{i=1}^r B_i'$ for faces $B_i, B_i'$ of $F_i$, then the intersection of $B_i$ and $B'_i$ is a face of both, for each $i \in \{1,\ldots,r\}$. The polyhedral version of the Cayley trick \cite{Sturmfels94} can be restated in the language of tropical factorization as follows.
\begin{theorem}[Cayley trick]
Let $\mathcal{S}$ be a set of polytopes. Then $f \in \NS$ if and only if 
$\Delta_f$ is a regular mixed subdivision of $\newt(f)$ with respect to a sequence of possibly repeated polytopes in $\S$.
\end{theorem}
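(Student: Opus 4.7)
The plan is to present this as a direct translation of the classical polyhedral Cayley trick (Sturmfels 1994; De Loera--Rambau--Santos) into the tropical language, so most of the work is verifying that the two formalisms match and then appealing to the known bijection.

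For the forward direction, I would start from the identity $(f_1 \odot \cdots \odot f_r)(x) = \sum_i f_i(x)$ and compute directly. Writing $f_i(x) = \max_{a \in A_i}(c^{(i)}_a + a \cdot x)$, the tropical product equals $\max_{(a_1,\dots,a_r)}\bigl(\sum_i c^{(i)}_{a_i} + (\sum_i a_i) \cdot x\bigr)$, so $\newt(f) = \sum_i \newt(f_i)$ and the lift on a point $a \in \newt(f)$ is $\max_{a=\sum a_i} \sum_i c^{(i)}_{a_i}$. The key geometric fact, which I would invoke as standard, is that the lower hull of this lifted Minkowski sum equals the Minkowski sum of the individual lower hulls, so the faces of the lower hull of $f$ split as $\sum_i \face_v(\text{lower hull of } f_i)$ for a common supporting direction $v$. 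Since each $f_i$ is an $\S$-unit, its lower hull is a single top-dimensional face projecting onto all of $\newt(f_i)$, a translate of some $F_i \in \S$. Thus each cell of $\Delta_f$ projects to $\sum_i \face_v(F_i)$ (up to translation). The proper-intersection condition is immediate: $\face_v(F_i) \cap \face_{v'}(F_i)$ is the face of $F_i$ supported by any direction in the common boundary cone of $v$ and $v'$ in $\N(F_i)$, hence a face of both.

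For the backward direction, given a regular mixed subdivision $\Delta_f$ with respect to a sequence $(F_1,\dots,F_r)$ drawn from $\S$, I would invoke the Cayley polytope construction $C(F_1,\dots,F_r) \subset \R^{n+r}$ together with the Sturmfels bijection between regular subdivisions of $C(F_1,\dots,F_r)$ and regular mixed subdivisions of $F_1+\cdots+F_r$. A height function on $\newt(f)$ inducing $\Delta_f$ lifts to a height function on $C(F_1,\dots,F_r)$; its restriction to the slice $F_i \times \{e_i\}$ gives a height function $h_i$ on $F_i$, and I define $f_i$ to be the associated tropical polynomial. The forward calculation above shows $f_1 \odot \cdots \odot f_r$ has Newton polytope $\sum_i F_i = \newt(f)$ with the correct lift, so equals $f$. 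It remains to confirm that each $f_i$ is an $\S$-unit, i.e., induces the trivial subdivision on $F_i$. This is exactly where the \emph{mixed} condition is used: a nontrivial subdivision of some $F_i$ would produce cells of $\Delta_f$ containing proper sub-polytopes of $F_i$ as Minkowski summands, not faces of $F_i$, contradicting the mixed definition.

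The main obstacle, both conceptually and in terms of writing, is this last step of the backward direction: making precise the dictionary between ``each $F_i$ is lifted flatly in the Cayley polytope'' and ``cells of the mixed subdivision decompose into faces (rather than sub-cells) of the $F_i$.'' Everything else is essentially bookkeeping: the forward direction is a direct computation with support functions, and the Cayley bijection is standard. I would finish by observing that the $F_i$ being in $\S$ (up to translation) is preserved through the construction, since units are defined exactly as tropical polynomials whose Newton polytope is a translate of a member of $\S$.
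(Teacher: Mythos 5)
The paper does not prove this theorem; it simply declares it a restatement of Sturmfels's polyhedral Cayley trick and cites \cite{Sturmfels94}. Your proposal therefore has no paper proof to be compared against; it attempts to supply the translation, which is a reasonable thing to do.

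Your forward direction is essentially correct and is the standard computation: the lift induced by $f_1\odot\cdots\odot f_r$ is the (upper) Minkowski sum of the individual lifts, each face of this sum projects to $\sum_i\face_v(F_i)$, and since each $f_i$ is a unit the lift of $f_i$ is affine over $F_i$, so $\face_v(\hat F_i)$ projects to a face of $F_i$. (Watch the sign convention: the paper defines $\Delta_f$ via the lower hull of $\{(a,-c_a)\}$, i.e. the upper hull in $c_a$; this doesn't change the substance.) The proper-intersection check you give is a little too quick — the intersection of two faces of a fixed polytope is automatically a face, so the content of the condition is the compatibility of the chosen decompositions across cells, which your Minkowski-sum-of-lifts picture does deliver — but the idea is right.

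The backward direction is where the real content is, and the obstacle you identify in your last paragraph is a genuine gap, not just a writing issue. The Sturmfels bijection pairs regular subdivisions of the Cayley polytope $C(F_1,\dots,F_r)$ with regular mixed subdivisions of $\sum F_i$ in the broader sense where cells decompose as sums of arbitrary sub-polytopes $B_i\subseteq F_i$, namely $B_i=\tau\cap(F_i\times\{e_i\})$ for cells $\tau$ of the Cayley subdivision. The paper's ``mixed'' is strictly stronger: it insists the $B_i$ be faces of $F_i$. Your argument that ``a nontrivial subdivision of some $F_i$ would produce cells of $\Delta_f$ containing proper sub-polytopes of $F_i$ as Minkowski summands'' proves that the canonical Cayley decomposition fails to use only faces; it does not by itself rule out that some other choice of $B_i'$, still faces of $F_i$ and still properly intersecting, decomposes the same cells. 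Sum-of-faces decompositions are not unique in general (already $[0,1]=[0,1]+\{0\}=\{0\}+[0,1]$ with $F_1=F_2=[0,1]$), so you need to show either that the proper-intersection condition forces the faces decomposition to agree with the canonical Cayley one, or more directly that if every maximal cell of $\Delta_f$ admits a sum-of-faces decomposition with the proper-intersection compatibility, then the restriction of the Cayley lift to each slice is affine. One way to close this: a collection of faces of $F_i$ that forms a polyhedral subdivision covering $F_i$ must consist of $F_i$ itself plus some of its boundary faces, so once you know the slice subdivision's cells are faces, it is trivial; the remaining work is showing the sum-of-faces hypothesis transfers to the slice. This is precisely the ``dictionary'' you say you would need to make precise, so your plan is aimed at the right target, but as written the proposal does not yet establish that each $f_i$ is a unit.
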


The tropical hypersurface $\T(f)$ defined by $f$ is the set of points in $\R^n$ where the graph of $f$ are not differentiable. A tropical hypersurface defines a balanced, weighted polyhedral complex, pure of dimension $n-1$ in $\R^n$, and a converse of this statement also holds, see \cite[Proposition 3.3.10]{Maclagan15}. A straight-forward definition chase from this result gives Corollary \ref{cor:factorize}, a  characterization of when a unit can be factorized off a given tropical polynomial.

\begin{definition}\label{defn:neighbor}
Let $\sigma,\sigma'$ be maximal cells in $\Delta_f$. Say that $\sigma'$ is a neighbor of $\sigma$ in direction $v$ if $\face_v(\sigma)$ is a maximal face of $\sigma$, and $\face_v(\sigma) = \face_{-v}(\sigma')$. Say that $\sigma'$ is in direction $v$ from $\sigma$ if there exists a sequence of cells $\sigma_1, \sigma_2, \ldots, \sigma_k$ in $\Delta_f$ where $\sigma_1 = \sigma$, $\sigma_k = \sigma'$, and $\sigma_{i+1}$ is a neighbor of $\sigma_i$ in direction $v$, for $i = 1, \ldots, k-1$.
\end{definition}

\begin{corollary}\label{cor:factorize}
Let $h$ be a tropical polynomial. Then 
	$\T(h) = \T(f) \cup \T(h')$ for some unit $f$ with $\newt(f) = F$ and some tropical polynomial $h'$ if and only if there exists a cell $\sigma \in \T(h)$ where $F \leq \sigma$, and for each maximal face $\face_v(\sigma)$ of $\sigma$, all cells of $\Delta_h$ in direction $v$ from $\sigma$ has $\face_v(F)$ as a Minkowski summand. 
\end{corollary}

\subsection{What counts as factorization}
There are at least three natural notions of `equality' for two tropical polynomials $f$ and $g$ in $n$ variables $x_1, \ldots, x_n$. 
\begin{enumerate}
  \item As algebraic polynomials: $f =_{1} g$ if and only if $c_a(f) = c_a(g)$ for all $a \in \Z^n$. 
  \item As functions: $f =_{2} g$ if and only if $f(x) = g(x)$ for all $x \in \R^n$.
  \item As balanced polyhedral complexes: $f =_{3} g$ if and only if $\T(f) = \T(g)$ as sets and as balanced weighted polyhedral complexes. 
\end{enumerate}
One can check that $f =_1 g$ implies $f =_2 g$, and $f =_2 g $ implies $f =_3 g$, but the converses are not true. Often equality as functions is taken to be the definition of equality in factorization problems \cite{Maclagan15, Speyer09}. As with classical factorization, it is more natural to consider this equality up to multiplication by constants and monomials. On the surface this seems to be a fourth notion of equality. However, we show that this is exactly $=_3$, and this is the notion of equality for tropical polynomials used throughout this paper. 

\begin{lemma}
We have $f =_{3} g \Leftrightarrow f =_{2} a\odot x^{\odot v} \odot g$ for some $v \in \Z^n, a \in \R$. 
\end{lemma}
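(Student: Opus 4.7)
The plan is to handle both directions by tracking how multiplication by an affine monomial acts on the tropical data of a polynomial.

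For the $(\Leftarrow)$ direction, if $f(x) = a + v \cdot x + g(x)$ pointwise, then $f$ and $g$ differ by an affine linear function, which is everywhere smooth. Hence their corner loci coincide as sets, so $\T(f) = \T(g)$ in that sense. I would then observe that multiplying $g$ by $a \odot x^{\odot v}$ corresponds to translating each lifted point $(a', -c_{a'}(g))$ by $(v, -a)$, which is a rigid translation of $\Delta_g$ by $v$. A rigid translation preserves every edge of $\Delta_g$ and its lattice length, and hence preserves all dual weights. So $\T(f)=\T(g)$ holds as balanced weighted polyhedral complexes as well, giving $f =_3 g$.

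For the $(\Rightarrow)$ direction, assume $\T(f) = \T(g)$ as balanced weighted complexes. On each maximal cell $C$ of the common complement $\R^n \setminus \T(f) = \R^n \setminus \T(g)$, write $f|_C(x) = a_C \cdot x + c_C$ and $g|_C(x) = b_C \cdot x + d_C$, where $a_C, b_C \in \Z^n$ are vertices of $\newt(f)$ and $\newt(g)$ respectively. My key step is to show that $a_C - b_C$ is a constant as $C$ varies. For two adjacent cells $C, C'$ separated by a maximal cell $F$ of $\T(f)$, both $a_C - a_{C'}$ and $b_C - b_{C'}$ are lattice vectors orthogonal to $F$, of common lattice length equal to the weight that $\T(f)=\T(g)$ assigns to $F$, and with the same orientation — pointing from $C'$ into $C$ — because inside $C$ we must have $a_C \cdot x + c_C > a_{C'} \cdot x + c_{C'}$, and similarly for $g$. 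Hence $a_C - a_{C'} = b_C - b_{C'}$, so $a_C - b_C = a_{C'} - b_{C'}$. Since any two maximal complement cells can be joined by a chain of such adjacencies, $v := a_C - b_C \in \Z^n$ is the same for every $C$.

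To finish, the function $h(x) := f(x) - g(x) - v \cdot x$ is continuous on $\R^n$ and equals the constant $c_C - d_C$ on each maximal cell $C$, so by continuity it is globally constant, say equal to $a \in \R$. This gives $f(x) = a + v \cdot x + g(x)$ for all $x$, i.e.\ $f =_2 a \odot x^{\odot v} \odot g$. The main obstacle I expect is fixing the orientation of the normals across adjacent cells: matching lattice lengths alone only determines these vectors up to sign, so one must use the defining property of $\T(f)$ as a corner locus to pin down the direction correctly. Once that is in place, the rest is just the duality between $\Delta_f$ and $\T(f)$ combined with a standard continuity argument.
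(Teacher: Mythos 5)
Your proof is correct, and in spirit it matches the paper's argument, but you work in the primal picture where the paper works in the dual one. The paper's proof of the nontrivial direction consists of two one-line invocations: (1) the balanced weighted complex $\T(f)$ determines the regular subdivision $\Delta_f$ up to a $\Z^n$-translation, so $\Delta_f = \Delta_g + v$; and (2) since corresponding lifted faces in the graphs of $f^\ast$ and $g^\ast$ are supported by the same normals, $f^\ast = g^\ast + a$ for a constant $a$, and then dualize. You instead avoid $\Delta_f$ and the Legendre transform altogether and argue directly with the linear pieces $a_C\cdot x + c_C$ on the maximal complement cells: the vector $a_C - a_{C'}$ across a wall $F$ is the outward (into $C$) primitive normal scaled by the weight of $F$, which is the same data for $f$ and $g$, and connectedness of the dual graph then pins down $v = a_C - b_C$ globally; continuity recovers $a$. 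This amounts to an explicit re-derivation of the fact the paper cites as a black box (that the weighted complex determines the subdivision up to translation), so your route is more elementary and self-contained, at the cost of being longer. The one step you flag as a concern — fixing the sign of the normal across a wall — you resolve correctly by using the defining inequality of a corner locus, and the remaining assumptions (connectedness of the complement through facets, that $a_C \in \Z^n$) are standard and hold here.
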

\begin{proof}
Suppose $f =_{2} a\odot x^{\odot v} \odot g$. Then $\T(f) = \T(g)$. Conversely, suppose $\T(f) = \T(g)$. The weighted polyhedral complex $\T(f)$ uniquely determines $\Delta_f$ up to a translation, thus, 
$\Delta_f = \Delta_g +v$ for some $v \in \Z^n$. Furthermore, a face $\sigma$ lifted in the graph of $f^\ast$ is supported by the same set of vectors as the face $\sigma+v$ lifted in the graph of $g^\ast$. Thus $f^\ast(x) = g^\ast(x) + a$ for some $a \in \R$. Since the function $f$ is uniquely determined by its Legendre transform, rewriting in polynomial terms gives $f =_{2} a\odot x^{\odot v} \odot g$.
\end{proof}

\begin{remark}
In several papers \cite{grigg2007elementary, izhakian2008tropical,tsai2012working} one associates a function $f$ with the unique polynomial $\bar{f}$ where $f =_{2} \bar{f}$ and all lattice points in $\Delta_{\bar{f}}$ are lifted, including interior points. This gives the stronger equivalence between $=_3$ and $=_1$
$$f =_{3} g \Leftrightarrow \bar{f} =_{1} a\odot x^{\odot v} \odot \bar{g}.$$ 
We do not take this approach here, as tropical multiplication does not commute with taking this canonical element. That is, for general polynomials $f$ and $g$,
\begin{equation}\label{eqn:non.commute}
\bar{f} \odot \bar{g} \neq_{1} \overline{f \odot g} =_{1} \overline{\bar{f}\odot \bar{g}}.
\end{equation}
Characterizing when $\bar{f} \odot \bar{g} =_{1} \overline{f \odot g}$ is the problem of finding competitive equilibrium in the product-mix auctions pioneered in \cite{Baldwin13}. For connections to integer programming and toric geometry, see \cite{Tran15}.
\end{remark}

\subsection{Signed Minkowski sums as vector additions}

\begin{definition}\label{defn:mindiff}
For non-empty polytopes $P,Q \subset \R^n$, if there exists another nonempty polytope $R\subset \R^n$ such that $P=Q+R$, then the signed Minkowski sum $P-Q$ is defined as $R$. According to Lemma \ref{lem:gooddiff}, such $R$ must be unique, so it is well-defined. 
\end{definition}

\begin{lemma}\label{lem:gooddiff}
	For non-empty polytopes $P,Q,R \subset \R^n$, if $P=Q+R$, then $R = \{x\in \R^n \mid x+Q\subset P \}$.
\end{lemma}

\begin{proof}
	For any $x\in R$ and $q\in Q$, we have $x+q\in P$, hence $x+Q\subset P$. Conversely, suppose there exists a point $y\in \R^{n}$ such that $y+Q\subset P$ while $y\notin R$. By Farkas Lemma, there exists a hyperplane such that $y$ and $R$ are separated by it. In other words, there exists a linear function $l$ defined on $\R^{n}$ such that $l(y)>0$ while $l(r)<0$ for all $r\in R$. Since $Q$ is closed, $l$ attains its maximum on $Q$ at some point $q1$. Then $l(y+q1)=l(y)+l(q1)>l(q1)$. Note that $y+q1\in P=Q+R$, so there exists $r\in R$ and $q2\in Q$ such that $y+q1=r+q2$. But $l(r+q2)=l(r)+l(q2)<0+l(q1)=l(q1)$, a contradiction! Hence such point $y$ does not exist and Lemma \ref{lem:gooddiff} is proved.
\end{proof}

In general, $P+(-Q) \neq P-Q$. For instance, if $P=Q$ and $P$ is a symmetric polytope around the origin, so $P=-P$, then $P+(-P) = 2 \cdot P$. In contrast, $P-P = \{\mathbf{0}\}$. 

\begin{lemma}[Signed Minkowski sum operations]\label{lem:signed.minkowski}
Let $P,Q,R,S \subset \R^n$ be non-empty lattice polytopes. 
\begin{enumerate}[(i)]
  \item If $P-Q$ is well-defined, then $(P-Q)+Q = P$.
  \item If $P-Q$ and $(P-Q)-R$ are well-defined, then so is $P - (Q+R)$ and it equals to $(P-Q) - R$.
  \item If $P-Q$ is well-defined, then so is $(P+R)-Q$ and it equals to $(P-Q)+R$.
  \item If both $P-Q$ and $R-S$ are well-defined, then so is $(P+R) - (Q+S)$ and it equals to $(P-Q)+(R-S)$.
  \item If $P-Q$ is well-defined, then it is a convex lattice polytope. 
\end{enumerate}
\end{lemma}

\begin{proof}
Statements (i), (ii), (iii) and (iv) follow directly from Definition \ref{defn:mindiff}. Statements (v) can be found in \cite[Lemma 11.1]{Postnikov09}. For statement (v), $P-Q$ being lattice appears in \cite[Lemma 11.1]{Postnikov09}. As for being a lattice polytope, let $S = P-Q$. Let $V(S), V(P), V(Q)$ be the set of vertices of $S, P$ and $Q$, respectively. Note that $V(P) \subseteq V(Q) + V(S)$. Take a vertex $s \in V(S)$. If there exists a vertex $q \in V(Q)$ with $q+s \in V(P)$, then $s \in \Z^n$. If there is no such vertex $q \in v(Q)$, then $s + Q \subset P \backslash V(P)$. Since $s+Q$ is a closed polytope, there exists a direction $w \in \R^n$ and a small $\epsilon > 0$ such that $s + [-\epsilon,\epsilon] \cdot w + Q \subset P$. So $s$ cannot be a vertex of $S$, a contradiction. Therefore, all vertices of $S$ are in $\Z^n$, as claimed.
\end{proof}
 
\begin{definition}\label{defn:signed.minkowski}
	Let $P_{1},P_{2},\cdots,P_{m}$ be non-empty polytopes in $\R^{n}$, $c_{1},c_{2},\cdots,c_{m}\in \Z$ with at least one being positive. If there exists a polytope $P'$ such that
	
	\begin{equation}
		\sum_{c_{i}<0}{(-c_{i})P_{i}}+P' = \sum_{c_{i}>0}{c_{i}P_{i}},
	\end{equation}
	
	then the signed Minkowski sum $\sum_{i=1}^{m}{c_{i}P_{i}}$ is defined to be $P'$. Throughout this work, when we write
	$\sum_{i=1}^{m}{c_{i}P_{i}}$, we mean the signed Minkowski sum. 
\end{definition} 

\begin{remark}\label{rem:signed.minkowski}
	\begin{enumerate}
		\item By Lemma \ref{lem:signed.minkowski}(i)-(iv), a signed Minkowski sum is independent of the order of its summands.
		\item For general polytopes $P,Q$, $P-Q$ is defined as $\{x\in \R^n \mid x+Q\subset P\}$ \cite{Postnikov09}. However, for this definition $P-Q$ could be empty and Lemma \ref{lem:signed.minkowski}(iii) may not hold. As a result, we follow the authors of \cite{Ardila10} in order to define signed Minkowski sums.
	\end{enumerate}
\end{remark}

\begin{definition}\label{defn:H-v}
	For matrix $H \in \Z^{r \times n}$ whose rows are primitive vectors, and a vector $b\in \R^{r}$, let $P_{H,b}$ denote the possibly empty polytope given by
	\[ P_{H,b} = \{x \in \R^n \mid Hx \leq b\}. \]
	Suppose $H = \begin{bmatrix}
	h_{1} & h_{2} & \cdots & h_{r}
	\end{bmatrix}^{T}$, where $h_{i}$ is the $i$-th row vector of $H$. For any polytope $P$, let
	\[ v(H,P) = \begin{bmatrix}
		\max_{x \in P}{h_{1}\cdot x} & \max_{x \in P}{h_{2}\cdot x} & \cdots & \max_{x \in P}{h_{r}\cdot x}
		\end{bmatrix}^{T}. \]
	And let $b(H) = \{b\in \R^{r} \mid P_{H,b}\ne \emptyset \text{ and } v(H,P_{H,b}) = b \}$. 
\end{definition}

\begin{remark}\label{rem:H-b}
	A polytope $P$ could be obtained by different pairs of $(H,b)$. If an $H$ is given, $v(H,P)$ is the smallest vector $b$ such that $P=P_{H,b}$.
\end{remark}

\begin{lemma}\label{lem:id}
	Let $H \in \Z^{r\times n}$ be a matrix whose rows are primitive vectors, and $P\subseteq \R^{n}$ be a lattice polytope such that for every facet $F$ of $P$, $F$ is contained in a hyperplane $\{x\in \R^{n}\mid h\cdot x = c\}$ where $c\in \R$ and $h$ is a row vector of $H$. Then $P_{H,v(H,P)} = P$ and $v(H,P)\in b(H)$.
\end{lemma}

\begin{proof}
	For $1\le i\le r$ let $h_{i}$ be the $i$-th row of $H$. For any $x\in P$, we have $h_{i}\cdot x \le \max_{y\in P}{h_{i}\cdot y} = v(H,P)_{i}$, hence $H\cdot x\le v(H,P)$, by definition $x\in P_{H,v(H,P)}$. Therefore $P\subseteq P_{H,v(H,P)}$. Suppose there exists a point $z\in P_{H,v(H,P)} - P$. Then $z$ and $P$ are separated by at least one facet $F$ of $P$. So there exists $1\le i\le r$ and $c\in \R$ such that $h_{i}\cdot x\le c$ for all $x\in P$ but $h_{i}\cdot z>c$. Then $c\ge \max_{x\in P}{h_{i}\cdot x} = v(H,P)_{i}$. However, since $z\in P_{H,v(H,P)}$, we have $h_{i}\cdot z\le v(H,P)_{i}\le c$, a contradiction! So $P_{H,v(H,P)} = P$. By definition of $b(H)$, we have $v(H,P)\in b(H)$.
\end{proof}

The following lemma states that when $H$ is appropriately chosen, then Minkowski addition of polytopes is equivalent to vector addition. 

\begin{lemma}\label{lem:minkowski.sum}
Let $H$ be a matrix whose row vectors are all distinct primitive normal vectors of a polytope $P$ and $b = v(H,P)\in b(H)$. If $b_1,\dots,b_m \in b(H)$ such $b = \sum_{i=1}^{m}{b_{i}}$, then
\[P_{H,b} = \sum_{i=1}^{m}{P_{H,b_{i}}}. \]
In this case, for any constants $c_1, \dots, c_m > 0$, 
\[P_{H,\sum_{i=1}^{m}{c_{i}b_{i}}} = \sum_{i=1}^{m}{c_{i}P_{H,b_{i}}}. \]
\end{lemma}

\begin{proof}
	By \cite[Theorem 1.7.5]{Schneider93}, for any $x\in \sum_{i=1}^mP_{H,b_i}$ we have $Hx\le \sum_{i=1}^{m}{b_{i}}=b$, so $x\in P_{H,b}$. Hence $\sum_{i=1}^{m}{P_{H,b_{i}}} \subseteq P_{H,b}$. Conversely, suppose there exists a point $y\in P_{H,b} - \sum_{i=1}^{m}{P_{H,b_{i}}}$. Then $y$ and $\sum_{i=1}^{m}{P_{H,b_{i}}}$ are separated by some hyperplane. In other words, their exists a vector $l\in \R^{n}$ such that $l\cdot y > l\cdot x$ for all $x\in \sum_{i=1}^{m}{P_{H,b_{i}}}$. Now consider the following optimization problem: given $a\in \R^{r}$, for $x\in \R^{n}$, maximize $l\cdot x$ subject to $Hx\le a$. Since all constraints are linear, the solution could be obtain by Fourier-Motzkin elimination and the solution must be in the form $\sum_{j=1}^{n}{w_{j}a_{j}}$, where $w_{j}$'s are positive constants only depending on $H$ and $l$. Then we have
	
	\[l\cdot y \le \sum_{j=1}^{n}{w_{j}b_{j}}. \]
	
	Now for each $1\le i\le r$, choose $x_{i}\in P_{H,b_{i}}$ such that $l\cdot x_{i} = \sum_{j=1}^{n}{w_{j}(b_{i})_{j}}$. Then $x=\sum_{i=1}^{r}\in \sum_{i=1}^{m}{P_{H,b_{i}}}$ and
	\[l\cdot x = \sum_{i=1}^{r}{l\cdot x_{i}} =  \sum_{i=1}^{r}{\left(\sum_{j=1}^{n}{w_{j}(b_{i})_{j}}\right)} = \sum_{j=1}^{n}{w_{j}\left(\sum_{i=1}^{r}{(b_{i})_{j}}\right)} = \sum_{j=1}^{n}{w_{j}b_{j}}, \]
	a contradiction! So $\sum_{i=1}^{m}{P_{H,b_{i}}} = P_{H,b}$. The second claim follows from replacing $P$ with $P_{H,\sum_{i=1}^{m}{c_{i}b_{i}}}$ and $b_{i}$ with $c_{i}b_{i}$. 
\end{proof}

\begin{remark}\label{rem:tightness}
	Lemma \ref{lem:minkowski.sum} is no longer true if some $b_{i}$ does not belong to $b(H)$. Here is an example. Let $n=2, r=4$ and $P$ be the convex hull of $(0,0),(6,0),(1,5),(0,5)$, which is a right-angled trapezoid. Then
	
	\[ 
	H = \begin{bmatrix}
	-1 & 0 \\
	0 & -1 \\
	0 & 1 \\
	1 & 1
	\end{bmatrix}, b = (0,0,5,6)^{T}.\]
	
	Now let
	\[b_{1} = (0,0,2,4)^{T}, b_{2} = (0,0,3,2)^{T}. \]
	
	Then $b=b_{1}+b_{2}$ and $P_{H,b_{1}}$, $P_{H,b_{2}}$ are the convex hulls of $\{(0,0),(4,0),(2,2),(0,2)\}$ and $\{(0,0),(2,0),(0,2)\}$. Here note that $v(H,P_{H,b_{2}}) = (0,0,2,2)^{T}$, so $b_{2}\notin b(H)$. While 
	
	\[P_{H,b_{1}}+P_{H,b_{2}} = \conv{(0,0),(6,0),(2,4),(0,4)}\ne P. \]

\end{remark}

Proposition \ref{prop:minkowski.as.addition} below generalizes this result to signed Minkowski addition. This proposition serves two purposes. First, it is a vehicle to prove unique factorization. Second, it gives an algorithm to decompose a polytope into a signed Minkowski sum with respect to some given set of polytopes. 

\begin{proposition}\label{prop:minkowski.as.addition}
For $i = 1,\dots,m$, let $P_{H,b_i} \subset \Z^n$ be non-empty lattice polytopes where $H \in \Z^{r \times n}$ is the set of primitive normal vectors of their Minkowski sum $P := \sum_{i=1}^mP_{H,b_i}$. For $y^+,y^- \in \mathbb{N}^m$, suppose the signed Minkowski sum $\sum_{i=1}^{m}{(y^+_i-y^-_i)P_{H,b_i}}$ is well-defined. Then 
$$ \sum_{i=1}^m{(y^+_i-y^-_i)P_{H,b_i}} = P_{H,b}$$
where 
\begin{equation}\label{eqn:b.sum}
b = \sum_{i=1}^{m}{(y^+_i-y^-_i)b_i} \in \Z^r.
\end{equation}
\end{proposition}

\begin{proof}
Suppose $\sum_{i=1}^m(y^+_i-y^-_i)P_{H,b_i} = P$, where $P\subset \R^{n}$ is a nonempty polytope. By Definition \ref{defn:signed.minkowski}, 

\begin{equation}\label{eqn:minkowski.rewrite}
	\sum_{y^+_i>y^-_i}{(y^+_i-y^-_i)P_{H,b_i}} = \sum_{y^+_i<y^-_i}{(y^-_i-y^+_i)P_{H,b_i}} + P.
\end{equation}

Since $P\le P_{H, b^{+}_{i}}$, all primitive normal vectors of $P$ belong to $H$, hence there exists a vector $b\in b(H)$ such that $P=P_{H,b}$. By Lemma \ref{lem:minkowski.sum}, the LHS of (\ref{eqn:minkowski.rewrite}) is $P_{H, b^{+}_{i}}$, where $ b^{+}_{i}= \sum_{y^+_i>y^-_i}{(y^+_i-y^-_i)b_{i}}$. In addition, $\sum_{y^+_i<y^-_i}{(y^-_i-y^+_i)P_{H,b_i}} = P_{H, b^{-}_{i}}$, where $b^{-}_{i} = \sum_{y^+_i<y^-_i}{(y^-_i-y^+_i)b_{i}}$. Therefore the RHS of (\ref{eqn:minkowski.rewrite}) is $P(H, b^{-}_{i}+b)$ and $b^{+}_{i} = b^{-}_{i} + b$. So

\[b = b^{+}_{i} - b^{-}_{i} = \sum_{y^+_i>y^-_i}{(y^+_i-y^-_i)b_{i}} - \sum_{y^+_i<y^-_i}{(y^-_i-y^+_i)b_{i}} = \sum_{i=1}^{m}{(y^+_i-y^-_i)b_i}\in \Z^{r}. \]
\end{proof}

\section{Unit Polynomials and Bases}\label{sec:unit}
We now characterize and give conditions for unique factorizations for the set of units in $\NS,\ZS$ and $\ES$. Recall that $f$ is a unit if $\Delta_f$ is the trivial subdivision of its Newton polytope $\newt(f)$. By Corollary \ref{cor:factorize}, (rational) factorization of a unit is equivalent to (signed) Minkowski decomposition of its Newton polytope. Proposition \ref{prop:minkowski.as.addition} converts this problem to vector addition. In particular, unique factorization is possible if and only if the set of initial vectors forms a basis over $\mathbb{N}$ and $\mathbb{Z}$ for factorization and rational factorization, respectively. 

Throughout this section let $\S$ be a finite set of lattice polytopes in $\R^n$. Let $H(\S) \in \Z^{r \times n}$ be a matrix whose row vectors are all distinct primitive normal vectors of the polytope $\sum_{S \in \S}S$, with coordinate-wise lexicographic order. Then $H(\S)$ is uniquely defined.
 
Define
\begin{align*}
\B(\S) &= \{b \in \Z^{r}\cap b(H(\S)) \mid P_{H(\S),b} \in \S\} \label{eqn:b.s}, \\
\overline{\B}(\S) &= \{b \in \Z^{r}\cap b(H(\S))\mid P_{H(\S),b} \subset \Z^n \mbox{ is a non-empty lattice polytope}\}. \nonumber
\end{align*}

Recall that $\N\B$ denote the free module over $\mathbb{N}$, which is
\[\N\B(\S) = \left\lbrace \sum_{b \in \B(\S)}c_b \cdot b \in \Z^r: c_b \in \mathbb{N} \mbox{ for all } b \in \B(\S)\right\rbrace,\]
and $\Z\B(\S)$ is defined analogously.

\begin{remark}\label{rem:nonlattice}
	Note that even if $b\in \Z^{r}\cap b(H(\S))$, $P_{H(\S),b}$ could still not be a lattice polytope. Here is an example. Let
	\[\S = \{\conv\{(0,0), (1,2)\}, \conv\{(0,0), (-1,2)\} \}. \]
	Then
	\[\sum_{S\in \S}{S} = \conv\{(0,0),(1,2),(0,4),(-1,2)\} \]
	and
	\[H(\S) = \begin{bmatrix}
		-2 & -1 \\ -2 & 1 \\ 2 & -1 \\ 2& 1
	\end{bmatrix}. \]
	Now $b=(-1,2,0,3)^{T} \in \Z^{4}\cap b(H(\S))$, but
	\[P_{H(\S),b} = \conv\{(0.25,0.5),(0.75,1.5),(0.25,2.5),(-0.25,1.5)\}\]
	is not a lattice polytope.
\end{remark}

\begin{proposition}\label{prop:factor.unit}
Let $f$ be a unit tropical polynomial in $\R^n$, $P$ its Newton polytope. Then
\begin{enumerate}[(i)]
  \item $f \in \ES$ if and only if $P = P_{H(\S),b}$ for some $b \in \overline{\B}(\S)$
  \item $f \in \NS$ if and only if $P = P_{H(\S),b}$ for some $b \in \N\B(\S)$.
  \item $f \in \ZS$ if and only if $P = P_{H(\S),b}$ for some $b \in \overline{\B}(\S) \cap \Z\B(\S)$.
\end{enumerate}
\end{proposition}
\begin{proof}
	(i) By Corollary \ref{cor:factorize}, $f \in \ES$ if and only if there exist lattice polytopes $Q,R$ such that $P+Q=R$ and $R$ is a Minkowski sum of polytopes in $\S$. Suppose such $Q$ and $R$ exist. Let $b=v(H(\S),P)$. By \cite[Proposition 7.12]{ziegler1995lectures}, the normal fan of $R$ is a refinement of the normal fan of $P$. Hence $H(\S)$ and $P$ satisfies the condition of Lemma \ref{lem:id}, and thus $P=P_{H(\S),b}$ and $b\in b(H(\S))$. Hence $b \in \overline{\B}(\S)$.
	
	Conversely, suppose $P = P_{H(\S),b}$ for some $b \in \overline{\B}(\S)$. Let $S$ be the Minkowski sum of all polytopes in $\S$. Since $P = P_{H(\S),b}$, every facet of $P$ is parallel to some facet of $S$, hence the normal fan of $S$ is a refinement of the normal fan of $P$. By \cite[Page 318-319]{Grunbaum67}, there exists $\lambda>0$ such that $P$ is a Minkowski summand of $\lambda S$. Since any $\lambda'>\lambda$ works too, we may assume $\lambda \in \N$. Hence there exists another convex polytope $Q$ such that $P+Q=\lambda S$. Since both $\lambda S$ and $P$ are lattice polytopes, so is $Q$. In addition, $\lambda S$ is a Minkowski sum of polytopes in $\S$. Therefore we can take $R=\lambda S$.
	
	(ii) By Corollary \ref{cor:factorize}, $f \in \NS$ if and only if $P$ is a Minkowski sum of polytopes in $\S$. Suppose $P = \sum_{i=1}^{k}{P_{i}}$ where each $P_{i}\in \S$. Since $P_{i}\in \S$, all facets of $P_{i}$ belong to hyperplanes cut out by rows of $H(\S)$. We let $b_{i} = v(H(\S), P_{i})$, by Lemma \ref{lem:id}, $P_{i} = P_{H(\S),b_{i}}$ and $b_{i} \in b(H(\S))$, thus $b_{i}\in \B(\S)$. Now let $b=\sum_{i=1}^{k}{b_{i}}\in \N\B(\S)$. Note that $b = v(H(\S), P)$, by Lemma \ref{lem:id} again we have $P=P_{H(\S),b}$.
	
	Conversely, if $P=P(H(\S),b)$ for some $b\in \N\B(\S)$. Then there exists $b_{1},\cdot,b_{k}\in \B(\S)$ such that $b = \sum_{i=1}^{k}{b_{i}}$. Since $b_{i}\in b(H(\S))$ for $1\le i\le k$, by Lemma \ref{lem:minkowski.sum}, we have $P = \sum_{i=1}^{k}{P_{H(\S), b_{i}}}$ is a Minkowski sum of polytopes in $\S$.
	
	(iii) By Corollary \ref{cor:factorize}, $f \in \ZS$ if and only if there exist lattice polytopes $Q,R$ such that $P+Q=R$ and both $Q$ and $R$ are Minkowski sum of polytopes in $\S$. Suppose such $Q$ and $R$ exist. Then we can write
	
	\[Q = \sum_{i=1}^{k}{P_{H(\S),b_{i}}}, R = \sum_{j=1}^{l}{P_{H(\S),c_{j}}},\]
	
	where all $b_{i},c_{j}\in \B\S$. Now $R-Q=P$ is a well-defined polytope, by Proposition \ref{prop:minkowski.as.addition}, $P=P(H(\S), b)$, where $b = \sum_{j=1}^{l}{c_{j}} - \sum_{i=1}^{k}{b_{i}}$. So $b\in \Z\B(\S)$. In addition, since $P$ is a non-empty lattice polytope, $b\in \overline{\B}(\S)$.
	
	Conversely,	suppose $P = P_{H,b}$ for some $b \in \overline{\B}(\S) \cap \Z\B(\S)$. Since $b\in \Z\B(\S)$, there exists $k,l\in \N$ and vectors $b_{i},c_{j}\in \B\S$ for $1\le i\le k$ and $1\le j\le l$ such that $b = \sum_{j=1}^{l}{c_{j}} - \sum_{i=1}^{k}{b_{i}}$. Let $a=\sum_{j=1}^{l}{c_{j}} = b + \sum_{i=1}^{k}{b_{i}}$. By Lemma \ref{lem:minkowski.sum}, we have
	
	\[P_{H(\S),a} = \sum_{j=1}^{l}{P_{H(\S),c_{j}}} = P_{H(\S),b} + \sum_{i=1}^{k}{P_{H(\S),b_{i}}}. \]
	Since $b_{i},c_{j}\in \B\S$, by definition, $P_{H(\S),b_{i}}, P_{H(\S),c_{j}} \in \S$. Hence we can let $Q=\sum_{i=1}^{k}{P_{H(\S),b_{i}}}$ and $R=\sum_{j=1}^{l}{P_{H(\S),c_{j}}}$.
\end{proof}

Now we consider the problem of unique factorization. Note that both $\overline{\mathcal{B}}$ and $\mathcal{B}$ are integral vectors in $\R^r$ for some finite~$r$.  Thus, their $\Z$-modules $\mathbb{Z}\overline{\mathcal{B}}$ and $\Z\mathcal{B}$ are each isomorphic to some subgroup of $\mathbb{Z}^r$, so each must be finitely generated over $\Z$. 

\begin{definition}[Basis, Full Basis]\label{defn:basis}
Say that $\S$ is a basis if $\B(\S)$ is a basis over $\Z$ for $\Z\B(\S)$. Say that $\S$ is a full basis if $\B(\S)$ is a basis over $\Z$ for $\overline{\B}(\S)$. 
\end{definition}

\begin{proposition}[Unique Factorization]\label{prop:unique}
Let $\mathcal{S}$ be a set of polytopes. Then $\NS$ has unique factorization if and only if $\mathcal{S}$ is a basis.
\end{proposition}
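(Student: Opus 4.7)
The plan is to translate Minkowski decompositions into $\Z$-linear relations on $H$-constants and then invoke linear independence, where $H$ is the $H$-matrix of $\sum_{S \in \S} S$. Both directions are short once Proposition~\ref{prop:minkowski.as.addition} is in hand, together with the fact that an $H$-representable lattice polytope is uniquely determined by its $H$-constant via $Q = \{x : Hx \leq b^H(Q)\}$.

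For the $(\Leftarrow)$ direction, assume $\S$ is a basis. Suppose $P \in \NS$ admits two decompositions $P = \sum_{S \in \S} y_S S = \sum_{S \in \S} y'_S S$ with $y, y' \in \mathbb{N}^{|\S|}$. Applying the nonnegative case of Proposition~\ref{prop:minkowski.as.addition} (equivalently, Lemma~\ref{lem:minkowski1}) to both decompositions yields
$\sum_S y_S b^H(S) = b^H(P) = \sum_S y'_S b^H(S)$,
hence $\sum_S (y_S - y'_S) b^H(S) = 0$. The $\Z$-linear independence of $B(\S)$ forces $y = y'$.

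For the $(\Rightarrow)$ direction, I argue the contrapositive. Suppose $B(\S)$ is $\Z$-linearly dependent, so $\sum_S c_S b^H(S) = 0$ for some integers $c_S$ not all zero. Decompose coordinatewise $c_S = c_S^+ - c_S^-$ with $c_S^\pm \in \mathbb{N}$ and $\min(c_S^+, c_S^-) = 0$; since the supports are disjoint and $c \neq 0$, we have $c^+ \neq c^-$. The relation rewrites as $\sum_S c_S^+ b^H(S) = \sum_S c_S^- b^H(S)$, which by Lemma~\ref{lem:minkowski1} reads $b^H(P^+) = b^H(P^-)$ for $P^\pm := \sum_S c_S^\pm S$. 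Because $H$-constants uniquely determine $H$-representable lattice polytopes, $P^+ = P^-$ as subsets of $\R^n$, yielding two distinct Minkowski decompositions of a single polytope in $\NS$ and thereby contradicting unique factorization. The only real subtlety is ensuring that $P^+$ and $P^-$ coincide as actual subsets of $\R^n$ rather than merely up to translation, and this is precisely what the $H$-constant encoding of Section~\ref{sec:unit} buys us; working directly with equivalence classes of polytopes would give only $P^+ \equiv P^-$ and so a weaker conclusion.
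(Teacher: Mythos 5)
Your argument lives entirely at the level of polytopes and Minkowski decompositions — you treat $P$ as a polytope and prove that $\mathbb{N}\mathcal{S}$ (the monoid of lattice polytopes) has unique Minkowski factorization if and only if $\mathcal{S}$ is a basis. That is Corollary~\ref{cor:unique.unit}, which the paper states without proof; your $H$-constant argument via Lemma~\ref{lem:minkowski1} (and, in the $(\Rightarrow)$ direction, the care about literal equality of $P^+$ and $P^-$ rather than equality up to translation) is a correct way to establish it. But Proposition~\ref{prop:unique} is a statement about $\NS$, the monoid of tropical \emph{polynomials}, and your proof never makes the passage from polytopes to polynomials.

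That passage is precisely what the paper's proof supplies. For necessity: a polytope $P$ with two distinct Minkowski decompositions must be lifted to a \emph{unit} $f$ with $\newt(f) = P$ (Corollary~\ref{cor:cayley}); the two decompositions then become two distinct factorizations of $f$ into $\mathcal{S}$-units. Your contrapositive argument stops at "two distinct Minkowski decompositions of a single polytope" and so has not yet produced a counterexample in $\NS$. For sufficiency the gap is more substantial: given $f \in \NS$ with two factorizations $f = f_1 \odot \cdots \odot f_r = f'_1 \odot \cdots \odot f'_{r'}$, uniqueness of the Minkowski decomposition of $\newt(f)$ only tells you that the two \emph{sequences of Newton polytopes} agree. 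A unit is not determined up to $=_3$ by its Newton polytope alone (it also carries a linear part, i.e.\ the position of its tropical hyperplane), so you still must argue that the actual factors $f_i$ and $f'_j$ coincide. The paper closes this via the bijection underlying the Cayley trick: given $\Delta_f$ and the sequence of summand polytopes, the regular mixed subdivision data determine the height functions on each summand and hence the units themselves. Without some version of that step, your $(\Leftarrow)$ direction proves only the polytope-level claim, not unique factorization in $\NS$.
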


\begin{proof}
Since $\NS$ contains the set of units polynomials, by Proposition \ref{prop:factor.unit}, $\S$ is a basis is a necessary condition for $\NS$ to have unique factorization. Now we prove sufficiency. Suppose $\mathcal{S}$ is a basis. Let $f \in \NS$. Suppose there are two factorizations of $f$. Multiply $f$ by a constant and a monomial if necessary, one can assume
$$f = f_1 \odot \dots \odot f_r = f'_1 \odot \dots \odot f'_{r'}$$  
for some units $f_i, f'_j$ with $\newt(f_i), \newt(f'_j) \in \mathcal{S}$, $i \in \{1,\ldots,r\}, j \in \{1,\ldots,r'\}$. By the Cayley trick, $\sum_{i=1}^r \newt(f_i)$ and $\sum_{j=1}^{r'} \newt(f'_j)$ are both equal to the support of $\Delta_f$. Since $\S$ is a basis, the sequence $(\newt(f_i))$ must equal the sequence $(\newt(f'_j))$, counting multiplicity. By the bijection given in the Cayley trick, the factorization of $f$ is uniquely determined by the sequence of polytopes to which $\Delta_f$ is a regular mixed subdivision. So $f$ has a unique factorization. 
\end{proof}

\begin{example}\label{ex:running.basis}
For $d = 2$, let $\S = \{S_1,\dots,S_{10}\}$ be the ten lattice polytopes shown in Figure \ref{fig:s}. Up to translation, this set $\S$ contains six primitive edges corresponding to the following vectors
\begin{equation}\label{eqn:s1.d2}
\S^1 = \{(0,1), (1,0), (1,1), (1,-2), (-2,1), (1,-1)\}. 
\end{equation}
The matrix $H(\S)$ has 12 row vectors, which are these six and their negatives
\begin{equation}\label{eqn:h.s}
(1,0), (0,1), (1,1), (1,-1), (1,2), (2,1). 
\end{equation}
The set $\overline{\mathcal{B}}(\S)$ consists of all lattice polytopes whose primitive eges are in $\S^1$. If $P \in \overline{\mathcal{B}}(\S)$ has $r$ edges, then one can list its consecutive edges, so that up to translation $P$ can be represented as a sequence of pairs $((w_1,s_1),\dots,(w_r,s_r))$, where $|w_i|$ is the length of the $i$-th edge of $P$ which is parallel to some $s_i \in \S^1$. Conversely, any such sequence with $\sum_{i=1}^rw_is_i = (0,0)$ and $\sum_{i \in I}w_is_i \neq (0,0)$ for all $I \subsetneq [r]$ defines a polytope in $\overline{\mathcal{B}}(\S)$ up to translation. By simple geometric arguments, one can derive the $H$-representation of $P$ from its edge sum sequence $((w_i,s_i))$, and thus prove that $P \in \overline{\mathcal{B}}(\S)$ if and only if $P \in \Z\S$. So $\S$ is a full basis. 
	
\begin{figure}[h]
\begin{center}
\includegraphics[width=0.75\textwidth]{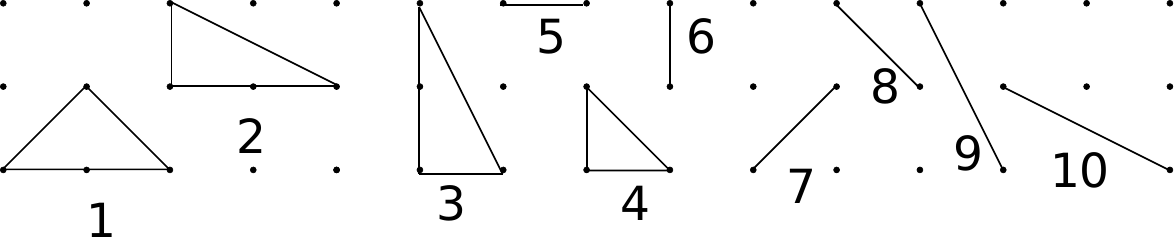} 
\end{center}
\vskip-0.5cm
\caption{A full basis $\S$.}\label{fig:s}
\end{figure}
\end{example}
\begin{example}\label{ex:sprime}
Figure \ref{fig:sprime} shows another full basis $\S'$ for the edge set $\S^1$ in \eqref{eqn:s1.d2}. Any polygon in $\Z^2$ with edges parallel to those in $\S^1$ has a unique decomposition in $\Z\S$ as well as $\Z\S'$. The decomposition with respect to one basis can be simpler. For example, let $P$ be the second polytope from the left of $\S'$ in Figure \ref{fig:s}. It has a trivial decomposition in $\Z\S'$, while its decomposition in $\Z\S$ is shown in Figure \ref{fig:s.prime}.
\begin{figure}[h]
\begin{center}
\includegraphics[width=0.75\textwidth]{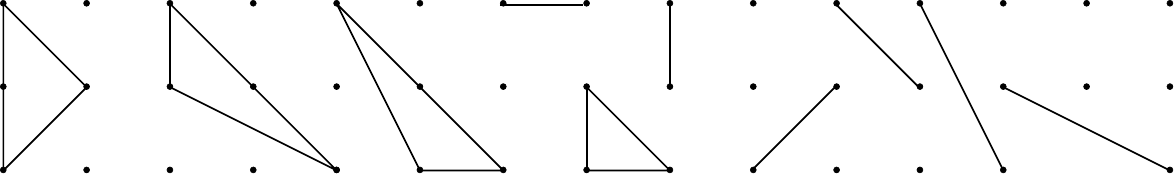} 
\end{center}
\caption{Another full basis $\S'$.}\label{fig:sprime}
\end{figure}
\begin{figure}[h]
\begin{center}
\includegraphics[width=0.75\textwidth]{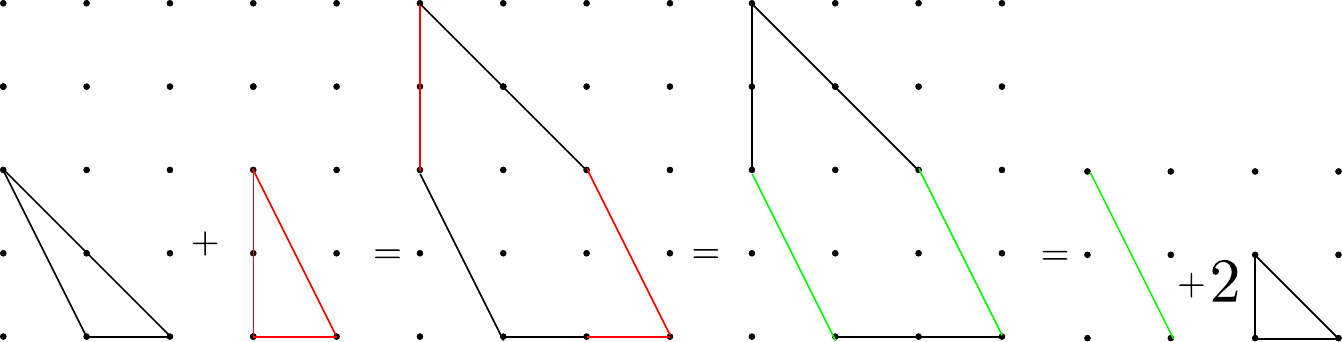} 
\end{center}
\caption{Decomposition of a polytope in $\Z\S$ with respect to two different full bases $\S$ and $\S'$.}\label{fig:s.prime}
\end{figure}

\end{example}

\section{Positive bases and local factorization}\label{sec:main}

Fix a finite set $\S$ of lattice polytopes in $\R^n$. In this section, we define the key concept of positive bases and prove Theorems \ref{thm:main} and~\ref{thm:SS}.

Since unique local factorization is a very strong criterion, the definition of positive bases is somewhat intricate. Throughout this section, we shall analyze the set of lattice polytopes $\S$ in Example \ref{ex:running.basis} as the running example.

\begin{definition}
Say that $\S$ is \emph{hierarchical} if $S \in \S$ implies all proper faces of $S$ are in $\mathbb{N}\S$. 
\end{definition}

\begin{example}
Let $\S$ be the ten lattice polytopes in Figure \ref{fig:s}. The edges of $S_1$ are integer multiples of $S_5,S_7$ and $S_8$. Similarly, edges of $S_2$ correspond to $S_5,S_6,S_{10}$, those of $S_3$ correspond to $S_5,S_6,S_9$, and those of $S_4$ correspond to $S_5,S_6,S_8$. So $\S$ is hierarchical. Note that $\S$ is still hierarchical if any polytopes from $S_1$ to $S_4$ were removed, although the remaining set of polytopes would no longer be a basis.
\end{example}

\begin{lemma}\label{lem:necessary}
Suppose $\NS$ has unique and local factorization. Then $\S$ is a hierarchical basis.
\end{lemma}
\begin{proof}
By Proposition \ref{prop:unique}, $\S$ must be a basis. For each $S \in \S$, let $f$ be a unit with $\newt(f) = S$. Then trivially $f \in \NS$. Since $\NS$ has local factorization, each lower-dimensional cell of $\Delta_f$ is in $\NS$, therefore proper faces of $S$ must be in $\mathbb{N}\S$. So $\S$ is hierarchical.
\end{proof}

Positive bases are hierarchical bases with an extra orientability condition. To define this, first we need the observation that the rows of the $H$-matrix of a hierarchical basis $\S$ come in pairs.

\begin{lemma}
Suppose $\S$ is hierarchical. If $v$ is a row vector of the $H$-matrix of $\S$, then $-v$ is also.
\end{lemma}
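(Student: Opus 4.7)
The plan is to reduce the lemma to the stronger symmetric claim that $\dim \face_v(\Sigma) = \dim \face_{-v}(\Sigma)$ for every $v \in \R^n$, where $\Sigma := \sum_{S \in \S} S$ so that $H := H(\Sigma)$. Since the rows of $H$ are precisely the primitive outer normals to the codim-$1$ faces of $\Sigma$, and primitivity is preserved under sign flip, this dimension equality immediately gives the conclusion: if $v$ is a row, then $\face_v(\Sigma)$ is codim $1$, hence $\face_{-v}(\Sigma)$ is codim $1$ as well, so $-v$ is also a row.

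To prove the symmetric equality, I partition $\S = \S_0 \sqcup \S_1$ relative to $v$, where $\S_0 := \{S \in \S : \face_v(S) = S\}$ consists of the polytopes whose affine span is perpendicular to $v$. For $S \in \S_0$ clearly $\face_v(S) = \face_{-v}(S) = S$. For $S \in \S_1$, both $\face_v(S)$ and $\face_{-v}(S)$ are proper faces of $S$, hence lie in $\mathbb{N}\S$ by the hierarchical hypothesis. Writing $\face_{\pm v}(S) \equiv \sum_{T \in \S} y^{\pm}_{T,S}\,T$ with $y^{\pm}_{T,S} \in \mathbb{N}$, and noting that the affine span of $\face_{\pm v}(S)$ lies in a hyperplane perpendicular to $v$, any $T$ appearing with positive coefficient must itself have affine span perpendicular to $v$; that is, $T \in \S_0$.

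Set $V_S := \spanv(S - S)$ and $V_S^{\pm} := \spanv(\face_{\pm v}(S) - \face_{\pm v}(S))$. The previous paragraph gives $V_S^{\pm} \subseteq \sum_{T \in \S_0} V_T$ for every $S \in \S_1$. Applying the standard identity $\dim(\sum_i P_i) = \dim(\sum_i V_{P_i})$ for Minkowski sums to $\face_{\pm v}(\Sigma) = \sum_{S \in \S_0} S + \sum_{S \in \S_1} \face_{\pm v}(S)$, I obtain
\[
\dim \face_{\pm v}(\Sigma) \;=\; \dim\Bigl(\sum_{S \in \S_0} V_S \,+\, \sum_{S \in \S_1} V_S^{\pm}\Bigr) \;=\; \dim \sum_{S \in \S_0} V_S,
\]
which is independent of the sign of $v$ and completes the argument.

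The main obstacle is translating the asymmetric hierarchical hypothesis (a statement about proper faces, not about opposite faces) into a sign-symmetric conclusion. The key insight is that for any $S \in \S_1$, the parallel hyperplanes supporting $\face_v(S)$ and $\face_{-v}(S)$ force the admissible summands in any $\mathbb{N}\S$-decomposition to lie in the common subfamily $\S_0$, so $\dim \face_v(\Sigma)$ and $\dim \face_{-v}(\Sigma)$ both depend only on the single subspace $\sum_{S \in \S_0} V_S$.
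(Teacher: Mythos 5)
Your proof is correct in substance but takes a genuinely different route from the paper. The paper's proof observes that the hierarchical condition forces every edge of every $S\in\S$ to decompose in $\mathbb{N}\S$ as a sum of parallel edges, so every edge direction of $\Sigma=\sum_{S\in\S}S$ is already an edge direction in $\S$; it then invokes the fact that the normal fan of $\Sigma$ equals the normal fan of a zonotope (a Minkowski sum of line segments), which is a central hyperplane arrangement and therefore invariant under $v\mapsto -v$. You instead prove the stronger, sign-symmetric identity $\dim\face_v(\Sigma)=\dim\face_{-v}(\Sigma)$ directly, by partitioning $\S$ into $\S_0$ (polytopes with $\face_v(S)=S$, i.e.\ orthogonal to $v$) and $\S_1$, and showing the lineality space of $\face_{\pm v}(\Sigma)$ is always $\sum_{S\in\S_0}V_S$ regardless of sign. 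Both arguments hinge on the same key deduction from hierarchicality (summands of a proper face $\face_{\pm v}(S)$ must be orthogonal to $v$), but yours is more self-contained and bypasses the zonotope/normal-fan machinery, at the cost of a somewhat longer computation. Each has pedagogical merit.

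One small inaccuracy you should fix: you assert that for $S\in\S_1$ both $\face_v(S)$ and $\face_{-v}(S)$ are \emph{proper} faces. In the paper's terminology a proper face excludes vertices, and for $S\in\S_1$ it is entirely possible that, say, $\face_{-v}(S)$ is a vertex, in which case the hierarchical hypothesis does not directly apply. This is cosmetic, not fatal: a vertex has $V_S^{\pm}=\{0\}$, which is trivially contained in $\sum_{T\in\S_0}V_T$, so the containment $V_S^{\pm}\subseteq\sum_{T\in\S_0}V_T$ holds in that case too. You should split into the vertex case (trivial) and the proper-face case (your argument) to make the proof airtight.
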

\begin{proof}
For each $S \in \S$, each edge $e$ of $S$ is a proper face of dimension 1. Thus $e \in \mathbb{N}\S$, which means $e$ is a positive integer multiple of some edge $w \in \S$ as vector. Let $\Sigma^1$ be the sum of all such shortest edges $w \in \S$. The $H$-matrix of $\S$ is that of the polytope $\sum_{S \in \S}S$, whose normal fan equals to the normal fan of $\Sigma^1$, which equals a hyperplane arrangement. So its $H$-matrix has the form stated in the lemma.
\end{proof}

\begin{definition}[Orientation]
Suppose $\S$ is hierarchical. Let $H$ be its $H$-matrix. An orientation $\tau$ is a map from row vectors of $H$ to $\{ \pm 1\}$, such that $\tau(v) = -\tau(-v)$. Given an orientation $\tau$, let $H^\tau_- = \{v: \tau(v) = -1\}$ and $H^\tau_+ = \{v: \tau(v) = 1\}$. Say that $\S$ is positive with orientation $\tau$ if for each $v \in H^\tau_-$ and $S \in \S$, $\face_v(S)$ is either a vertex of $S$, or is $S$.
\end{definition}

\begin{definition}[Positive basis]\label{def:positive.basis}
Say that $\S$ is a positive basis if it is a hierarchical basis and there exists some orientation $\tau$ such that it is  positive with respect to.
\end{definition}

\begin{example}
Let $\S$ be the full basis from Example \ref{ex:running.basis}, Figure \ref{fig:s}. Orient the vectors of $H(\S)$ as $-,-,+,-,+,+$ in the order that they are listed in \eqref{eqn:h.s}, and orient their negatives with opposite signs. The six edges of $\S$ are translations of the six edges listed in (\ref{eqn:s1.d2}). This shows that $\S$ is a positive basis. The full basis $\S'$ in Example \ref{ex:sprime} is positive with respect to orientation $\tau'$ which has signs $-,-,+,+,-,-$. Both $\S$ and $\S'$ are thus positive bases associated to the same $H$-matrix.
\end{example}

The positive orientation restricts when two polytopes in $\mathbb{N}\mathcal{S}$ can share a face. In particular, regular mixed subdivisions constructible from a sequence of polytopes in $\mathcal{S}$ must have a particular structure. The following gives an equivalent characterization in terms of pairs of polytopes in $\S$, without reference to an orientation $\tau$, and thus is easy to verify in specific examples. This result underpins Algorithm \ref{alg:positive.basis} for verifying whether a given set of lattice polytopes $\S$ is a positive basis.

\begin{proposition}[Characterization of positive basis]\label{prop:positive.basis}
Suppose $\S$ is a hierarchical basis with $H$-matrix $H$. Then $\S$ is a positive basis if and only if for each row vector $v$ of $H$, there are no polytopes $S,S' \in \S$, not necessarily distinct, such that both $\face_v(S)$ and $\face_{-v}(S')$ are proper faces of $S$ and $S'$ of dimension at least one, respectively. 
\end{proposition}
\begin{proof}
Suppose $\S$ is a positive basis with orientation $\tau$. For any pair of polytopes $S,S' \in \S$, either $\face_v(S)$ or $\face_{-v}(S)$ must be a proper face. So the criterion holds. Conversely, suppose the criterion holds. This means for each tuple $(v,-v,S)$, exactly one of the following cases hold
\begin{enumerate}
  \item[(i)] $\face_v(S)$ is a proper face and $\face_{-v}(S)$ is a vertex
  \item[(ii)] $\face_{-v}(S)$ is a proper face and $\face_v(S)$ is a vertex
  \item[(iii)] $\face_{-v}(S)$ and $\face_v(S)$ are both equal to $S$ or are both vertices of $S$.
\end{enumerate}
Define a local partial orientation $\nu: (v,S) \mapsto \{\pm 1, 0\}$ as follows. 
$$
\nu(v,S) = \left\{\begin{array}{cc}
+1 & \mbox{ if (i)} \\
-1 & \mbox{ if (ii)} \\
0 & \mbox{ else.}
\end{array}\right.
$$
Note that $\nu(-v,S) = -\nu(v,S)$. By the hypothesis, for each fixed $v$, there are no two polytopes $S,S' \in \S$ such that $\nu(v,S) = +1$ and $\nu(v,S') = -1$. Thus, one can define a global partial orientation $\tau': v \mapsto \{\pm 1, 0\}$ such that $\tau'(v) = +1$, $\tau'(-v) = -1$ whenever $v$ supports a proper face of some $S \in \S$, and $\tau'(v) = \tau'(-v) = 0$ if $v$ and $-v$ never support a proper face of $S$ for all $S \in \S$. Set $\tau: v \mapsto \{\pm 1\}$ by $\tau(v) = \tau'(v)$ if $\tau'(v) \neq 0$, otherwise choose $\tau(v) = +1, \tau(-v) = -1$ at random. Now take $v \in H^\tau_+$, and $S \in \S$. Only cases (i) and (iii) can happen. That is, $\face_{-v}(S)$ is not a proper face of $\S$. By definition, $\S$ is a positive basis with respect to orientation $\tau$. 
\end{proof}
\begin{corollary}\label{cor:subset.positive}
If $\S$ is a positive basis of $\Z\S$ and $\S' \subseteq \S$ is hierarchical, then $\S'$ is a positive basis of $\Z\S' \subseteq \Z\S$.
\end{corollary}

\begin{proof}[Proof of Theorem \ref{thm:main}]
Let $\S$ be a positive basis, and $f$ be a tropical polynomial such that each cell of $\Delta_f$ is a Minkowski sum of some polytopes in $\S$. As $\mathcal{S}$ is a basis, by Proposition~\ref{prop:factor.unit}, this decomposition is unique. Let $\S(f)$ denote the sequence of polytopes in $\S$ that appear as Minkowski summands of the maximal cells of $\Delta_f$, with multiplicity. Note that $|\S(f)|$ is finite. We shall do induction on $|\S(f)|$. If $|\S(f)| = 1$, then $\Delta_f$ is the trivial subdivision of a single polytope in $\S$, so we are done. If $|\S(f)| > 1$, pick $F \in \S(f)$ of maximal dimension. We shall use Corollary \ref{cor:factorize} to show that $\T(f) = \T(F) \cup \T(f')$ for some polynomial $f'$. We then argue that each cell of $\Delta_{f'}$ is still in $\mathbb{N}\mathcal{S}$, and $\S(f') \subset \S(f)$, so $|\S(f')| < |\S(f)|$. This would complete the induction step. 
Let us prove the first claim that the condition of Corollary \ref{cor:factorize}| holds for $\Delta_f$. By the setup, there exists a cell~$\sigma \in \Delta_f$ with $F\leq \sigma$. Since $\sigma \in \mathbb{N}\S$, maximal faces of $\sigma$ are supported by vectors in $H(\S)$. Let $v \in H(\S)$ be such a vector. We need to show that all cells in direction $v$ from $\sigma$ has $\face_v(F)$ as a Minkowski summand (recall Definition \ref{defn:neighbor}). By induction on the number of neighbors, it suffices to show that the immediate neighbor of $\sigma$ in direction $v$ has this property. If $\sigma$ does not have a neighbor in direction $v$, then we are done. Otherwise, let $\sigma'$ be this neighbor, that is, $\face_v(\sigma) = \face_{-v}(\sigma')$. 
If $\face_v(F)$ is a point, then this is trivial. If $F \leq \sigma'$, then trivially $\face_v(F) \leq \face_v(\sigma')$. Therefore, we are left with the case that $\face_v(F)$ is not a vertex, and $F \not\leq \sigma'$. Write
$$ \sigma' = \sum_{S \in V^\perp} y_S S + \sum_{T \in V} y_T T $$
for unique $y_S, y_T \in \mathbb{N}$, where 
$$V^\perp = \{S \in \S(f): \face_v(S) \mbox{ is a proper face of } S, S \neq F\} $$
and
$$ V = \{T \in \S(f): \face_v(T) = T = \face_{-v}(T), T \neq F\}.$$ 
By Proposition \ref{prop:minkowski.as.addition}, $F \leq \sigma$ implies
$$ \face_v(F) \leq \face_v(\sigma) = \face_{-v}(\sigma') = \sum_{S \in V^\perp} y_S \face_{-v}(S) + \sum_{T \in V} y_T T. $$
We now argue that $\face_v(F) \leq \sum_{T \in V} y_T T$. If this holds, then $\face_v(\sigma') \geq \sum_T y_T T$ so $\face_v(F) \leq \face_v(\sigma')$ as needed. 
Suppose for contradiction that this does not hold. There is at least one $S \in V^\perp$ such that $\face_{-v}(S)$ is a proper face of $\S$, and $c \cdot \face_v(F) \geq \face_{-v}(S)$ for some $c \in \mathbb{N}$. If $\face_v(F)$ is also a proper face of $F$, then $\S$ cannot be a positive basis by Proposition~\ref{prop:positive.basis}. So we must have $\face_v(F) = F$. 
As $\S$ is a hierarchical basis, $F \in \S$ and $\face_{-v}(S) \in \mathbb{N}\S$, $c \cdot F \geq \face_{-v}(S)$ implies $\face_{-v}(S) = F$. But this means $S$ has dimension strictly larger than that of $F$, which is not possible as $F$ is maximal amongst those in $\S(f)$.
 So we obtain the desired contradiction. This proves the first claim.   
For the second claim on $|\S(f')|$, note that cells of $\Delta_{f'}$ are either equivalent to some cells of $\Delta_f$, or they have the form
$\tau' \equiv \tau - \omega$ for some $\tau \in \Delta_f$ and some face $\omega$ of $F$. Since $\S$ is hierarchical, $\tau, \omega \in \mathbb{N}\S$. Since $\S$ is a basis, $\tau' \in \mathbb{N}\mathcal{S}$. So all cells of $\Delta_f$ are in $\mathbb{N}\S$, and $\S(f') \subseteq \S(f)$. But $F \in \S(f)$ and $F \notin \S(f')$, so $\S(f') \subset \S(f)$, and thus $|\S(f')| < |\S(f)|$. This concludes the proof.
\end{proof}

\begin{proof}[Proof of Theorem \ref{thm:SS}]
Suppose $f \in \ZS$. Let $g \in \NS$ be such that $h = g \odot f \in \NS$. For each cell $\sigma_f$ of $\Delta_f$, there exists cells $\sigma_g$ of $\Delta_g$ and $\sigma_h$ of $\Delta_h$ such that
$$ \sigma_f + \sigma_g = \sigma_h. $$
By the Cayley trick, $\sigma_g, \sigma_h \in \mathbb{N}\S$, so $\sigma_f \in \Z\S$. Conversely, suppose all cells of $\Delta_f$ are in $\Z\S$. Compute the signed Minkowski sum decomposition of each cell of $\Delta_f$ with respect to $\mathcal{S}$. Let $\S^-(f)$ be the sequence of polytopes in $\S$ that appear with negative signs, with multiplicity. Similar to the proof of Theorem \ref{thm:main}, we shall do an induction on $|\S^-(f)|$. If $\S^-(f) = \emptyset$ then all cells of $\Delta_f$ are in $\mathbb{N}\S$. By Theorem~\ref{thm:main}, $f \in \NS$, so we are done. If not, for $S$ a polytope of maximal dimension in $\S^-(f)$, let $\sigma_S$ be the cell of $\Delta_f$ where $-y_SS$ appears in its signed Minkowsi decomposition for some $y_S > 0$. Define a unit $g(S)$ such that $\newt(g(S)) = S$, and that $g^\ast$ is a classical linear function such that $g^\ast = f^\ast$ restricted to $\sigma_S$. Let $f' := f \odot (g(S))^{\odot y_S}$. Since $\S$ is hierarchical, $S \in \mathbb{N}\S$ implies that its faces are in $\mathbb{N}\S$. So cells of $\Delta_{f'}$ are in $\Z\S$, and $\S^-(f') \subsetneq \S^-(f)$. As $\S^-(f)$ is a finite sequence, by induction we are done. So $f \in \ZS$, which proves that $\ZS$ has local factorization. Note that our proof produces a polynomial $g \in \NS$ such that $f \odot g \in \NS$. For uniqueness of this $g$, it is sufficient to show that this $g$ does not depend on the order amongst polytopes of maximal dimension in $\S^-(f)$. Indeed, note that if $S,S' \in \mathbb{N}\S$ are two polytopes of the same dimension, and $\omega$ is a proper face of $S$, then $\omega \neq S'$. Therefore, if $S, S'$ are two maximal dimensional polytopes in $\S^-(f)$, $S \neq S'$, then $S' \in \S^-(f \odot g(S))$. So the $g$ produced by the proof is unique. Furthermore, any other $\tilde{g} \in \NS$ such that $f \odot \tilde{g} \in \NS$ must contain enough units to bring all cells of $\Delta_f$ from $\Z\S\backslash \mathbb{N}\S$ to $\mathbb{N}\S$, and therefore must contain $g$ in its factorization. So the $g$ produced is the minimal denominator. Finally, let us prove the assertion on full positive basis. Suppose $\ZS = \ES$. In particular, $\Z\S = \mathbb{E}\S$, so $\S$ is a full basis by Definition \ref{defn:basis}. Conversely, suppose $\S$ is a full positive basis. For $f \in \ES$, let $g$ be a product of units such that $f \odot g \in \NS$. Then edges in $\Delta_f$ must be parallel to integer multiples of primitive edges in $\Delta_{f \odot g}$, which are contained in $\S^1$. Therefore, each cell of $\Delta_f$ is in $\mathbb{E}\S$. But $\S$ is a full basis, so 
each cell of $\Delta_f$ is also in $\Z\S$. As $\S$ is a positive basis, $\ZS$ has local factorization so $f \in \ZS$. 
\end{proof}

\begin{example}[Factorization into tropical plane curves of degree 2]\label{ex:d2}
Let $\S$ be the positive basis of Example \ref{ex:running.basis}. For concreteness, we fix an ordering on the rows of the $H$-matrix, so that it is the transpose of the following $3\times 14$ matrix:
	
	\begin{equation}\label{eqn:H.matrix}
		\begin{bmatrix}
		1 & -1 & 0 & 0  & 1 & -1 & 1  & -1 & 1 & -1 & 2 & -2 & 1 & -1 \\
		0 & 0  & 1 & -1 & 1 & -1 & -1 & 1  & 2 & -2 & 1 & -1 & 1 & -1 \\
		0 & 0  & 0 & 0  & 0 & 0  & 0  & 0  & 0 & 0  & 0 & 0  & 1 & -1 
		\end{bmatrix}.
	\end{equation}
	Let $f_{q}(x_1,x_2,x_3)=\max(2x_1+2x_2, x_1+3x_2-2, x_1+x_2+2x_3-3, 3x_1+x_3-1, x_1+2x_2+x_3-4, 4x_1-3)$. Its regular subdivision $\Delta_{f_q}$ consist of $3$ maximal cells $C_{1},C_{2},C_{3}$ shown in Figure \ref{fig:h_q}. Note that these cells are in $\Z\S$, with signed Minkowski decomposition
	\begin{eqnarray*}
		C_{1}&=&P_1-P_2+P_3-P_4+P_{10}+(1,0,1),  \\
		C_{2}&=&-P_1+2P_4+P_7+(1,1,-2),  \\
		C_{3}&=&-P_3+2P_4+P_9+(2,0,-2).  \\
	\end{eqnarray*}
By Theorem \ref{thm:SS}, $f_q$ admits a rational factorization $h_q = f_q \odot g_q$, where $g_q,h_q \in \NS$, that is, they are products of tropical quadratics in three variables. Indeed, Algorithm \ref{alg:gen-ZS} outputs
	\[
	\begin{split}
	&g_q(x) = \max(2x_1, 2x_3-10/3, x_2+x_3-2) \\ 
&+ \max(x_1+x_3, 2x_3-5/3, x_2+x_3-1/3) \\
&+ \max(2x_3, 2x_2-1, x_1+x_3-2) +\max(2x_1, 2x_3-5, x_1+x_2-2).
	\end{split}
	\]
	The product $h_q = f_q \odot g_q$ has degree 12. Algorithm \ref{alg:gen-NS} shows it is factorizable as product of seven $\S$-units. The decompositions of $\newt(h_q)$ are shown in Figure \ref{fig:h_q}. One can readily check from the figure that $g_q$ is the minimal polynomial in $\NS$ such that $f_q \odot g_q \in \NS$.
	\begin{equation*}
		\begin{split}
		&h_{q}(x)  = x_{1}-3x_{3} + 2\max(2x_3-\frac{5}{2},x_1+x_3,x_2+x_3-2) \\
&+\max(2x_3-\frac{8}{3},x_1+x_3-1,2x_2) + \max(x_2+x_3-2,2x_1)\\
&+2\max(2x_3,x_1+x_3-2,x_2+x_3-\frac{1}{2}) + \max(2x_3-\frac{10}{3},x_1+x_2-\frac{1}{3},2x_1). 
		\end{split}
	\end{equation*}
\end{example}
\begin{figure}[ht]

\centering	
\begin{minipage}[t]{0.44\textwidth}
	\includegraphics[width=1\textwidth]{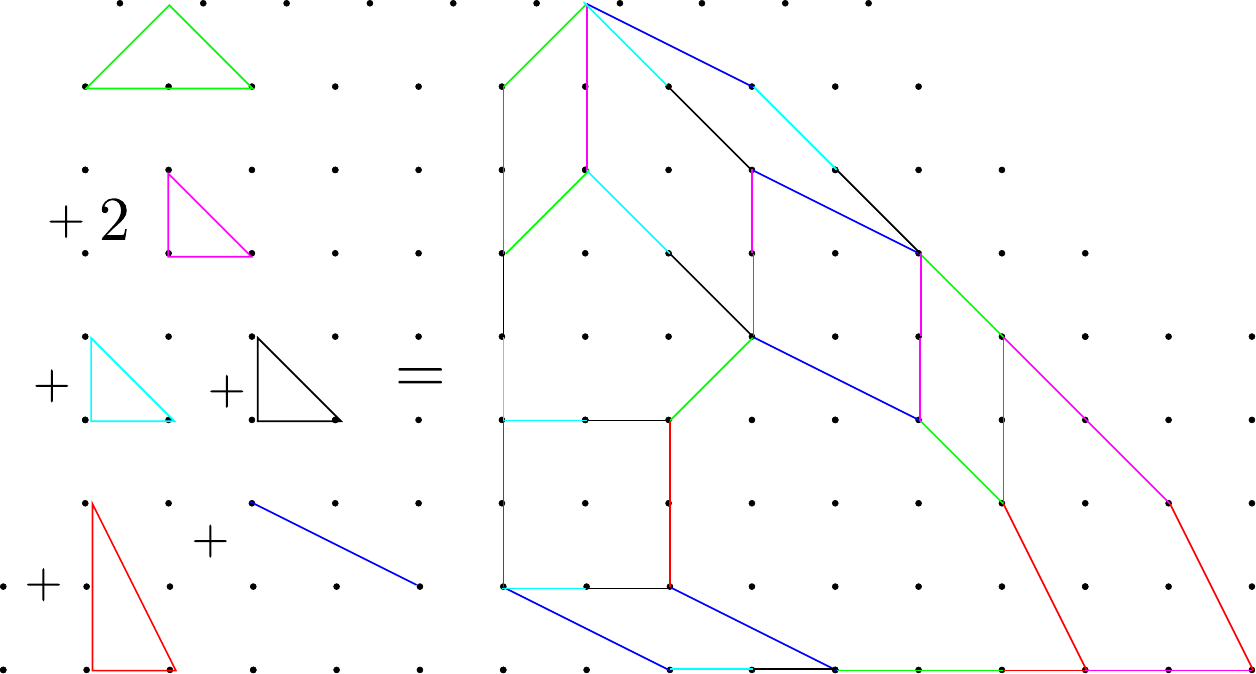}
\end{minipage}
\hspace{0.2em}
\begin{minipage}[t]{0.5\textwidth}
	\includegraphics[width=1\textwidth]{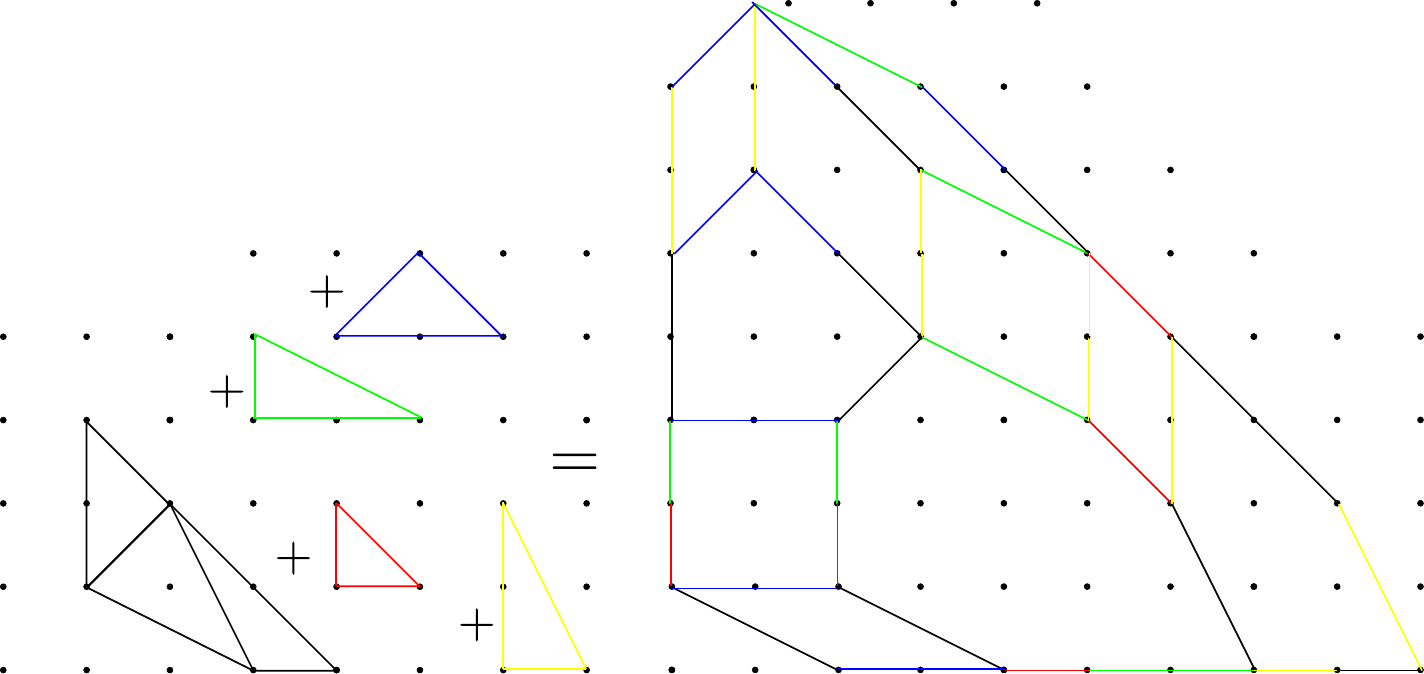}
\end{minipage}

\caption{An illustration of Theorem \ref{thm:SS}. Two ways to decompose $\Delta_{h_q}$ in Example \ref{ex:d2}: by writing $h_q$ as a product of units, or by writing $h_q = f_q \odot g_q$. In particular, this shows that $f_q$ is rationally factorizable.}\label{fig:h_q}
\end{figure}

\section{Two families of full positive bases}\label{sec:positive.basis}

\begin{lemma}\label{lem:simplex}
Let $\mathcal{S} = \{\Delta_I: I \subseteq [n], |I| \geq 2\}$ be the set of the standard simplex in $\R^n$ and its proper faces. Then $\mathcal{S}$ is a full positive basis.
\end{lemma}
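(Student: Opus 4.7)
The plan is to verify each of the four defining properties of a full positive basis: hierarchicality, the basis property of Definition~\ref{defn:basis}, the positive orientation condition of Definition~\ref{def:positive.basis}, and fullness. Hierarchicality and fullness follow from standard facts about simplex decompositions of generalized permutohedra; the basis property and positive orientation form the technical heart of the argument.

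For hierarchicality, the proper faces of $\Delta_I$ are precisely the subsimplices $\Delta_J$ with $J \subsetneq I$ and $|J| \geq 2$, all of which belong to $\mathcal{S}$. To identify the $H$-matrix, I observe that the Minkowski sum $\Sigma := \sum_{S \in \mathcal{S}} S$ has edge directions $\{e_i - e_j : i \neq j\}$ spanning the subspace $\{x \in \R^n : \sum x_i = 0\}$, so its normal fan is the braid arrangement. Consequently the rows of $H := H(\mathcal{S})$ are primitive representatives of $\pm e_J$ for $\emptyset \neq J \subsetneq [n]$, together with $\pm(1,\ldots,1)$ for the affine hull. A direct computation then gives $b^H(\Delta_I)_J = \mathbf{1}[I \cap J \neq \emptyset]$, and $\mathbb{Z}$-linear independence of $\{b^H(\Delta_I)\}_{|I|\geq 2}$ follows from Postnikov's ``$y$-basis'' realization of $\{\Delta_I\}$ as a free $\mathbb{Z}$-basis for generalized permutohedra~\cite{Postnikov09}, which upgrades to the basis property of $\mathcal{S}$ via Proposition~\ref{prop:factor.unit}.

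The positive orientation is the main obstacle. I apply Proposition~\ref{prop:positive.basis}: since $\face_{e_J}(\Delta_I) = \Delta_{I \cap J}$ and $\face_{-e_J}(\Delta_I) = \Delta_{I \setminus J}$ are themselves subsimplices, producing a proper face of $\Delta_I$ supported by $\pm e_J$ requires one of the blocks $I \cap J$, $I \setminus J$ to have size in $\{2, \ldots, |I|-1\}$. The task is to construct a global orientation $\tau$ so that, for each row $v = e_J$ of $H$, at most one side of $\pm v$ supports a proper face of some element of $\mathcal{S}$. I handle the extreme cases $|J| \leq 1$ and $|J^c| \leq 1$ directly (one side yields only vertices or the whole polytope), and the intermediate cases will require a case analysis that exploits the fact that proper faces of a simplex are themselves simplices of strictly smaller rank. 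Finally, fullness follows from Corollary~\ref{cor:full.basis}: since $\mathcal{S}^1 = \{\Delta_{ij}\} \subseteq \mathcal{S}$, it suffices to show that every lattice polytope whose edges are integer multiples of $e_i - e_j$ lies in $\mathbb{Z}\mathcal{S}$, which is the signed Minkowski decomposition of generalized permutohedra in the simplex basis, implicit in Postnikov~\cite{Postnikov09} and made explicit in Ardila--Benedetti--Doker~\cite{Ardila10}.
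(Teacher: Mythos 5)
Your approach mirrors the paper's proof: hierarchicality from the face structure of simplices, identification of the $H$-matrix rows as $\pm e_J$, positivity via Proposition~\ref{prop:positive.basis}, the basis property via Postnikov's $y$-basis, and fullness via \cite[Proposition 2.4]{Ardila10} together with Corollary~\ref{cor:full.basis}. The paper is even terser, dismissing positivity as ``straightforward to verify,'' so structurally you and the paper are aligned.

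But the positivity step, which you yourself flag as ``the main obstacle,'' is left genuinely unfinished, and the intermediate cases you defer are exactly where a literal reading of the criterion runs into trouble. Take $n \geq 4$, a row vector $v = e_J$ with $2 \leq |J| \leq n-2$, and $S = S' = \Delta_{[n]} \in \S$. Then $\face_v(\Delta_{[n]}) = \Delta_J$ and $\face_{-v}(\Delta_{[n]}) = \Delta_{[n]\setminus J}$ are both simplices of dimension at least one and neither equals $\Delta_{[n]}$, so both are proper faces of $\Delta_{[n]}$ under the paper's definition. This appears to violate the criterion of Proposition~\ref{prop:positive.basis} outright, regardless of which of $\pm e_J$ one tries to place in $H^\tau_+$. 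A complete proof therefore cannot be a routine sign-balancing case analysis on $|J|$: you must either pin down precisely which vectors count as rows of $H(\S)$ when $\Sigma = \sum_{S \in \S}S$ is not full-dimensional (there is real ambiguity in ``primitive outer normal'' for lower-dimensional polytopes, and the answer changes whether the problematic $e_J$ enter the check), or explain why positivity is only invoked in the main theorems at vectors where the condition does hold. Deferring these cases to an unspecified case analysis leaves the heart of the lemma unresolved.
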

\begin{proof}
The $H$-matrix of $\S$ consists of vectors of the form $v_I = \sum_{i\in I}e_i$ where $I \subseteq [n]$ and their negatives. It is straight forward to verify that $\S$ satisfies the criterion of Proposition \ref{prop:positive.basis}, so $\S$ is a positive basis. By \cite[Proposition 2.4]{Ardila10}, $P$ has edges parallel to $e_i - e_j$ if and only if $P \in \Z\S$. Thus $\S$ is full. \end{proof}

\begin{proposition}[Graphical bases]\label{prop:graphical}
Given a graph $\mathbb{G}$ on $n$ nodes, let $\S_{\mathbb{G}}$ consist of simplices $\Delta_I$, where $I \subseteq [n]$ runs over all cliques in $\mathbb{G}$. Then $\S_\mathbb{G}$ is a full positive basis.
\end{proposition}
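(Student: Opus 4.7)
The plan is to verify in order that $\S_{\mathbb{G}}$ is a basis, is positive, and is full. The starting observation is that the simplex $\Delta_I = \conv\{e_i : i \in I\}$ has edges $e_i - e_j$ for distinct $i,j \in I$; since $I$ runs over cliques of $\mathbb{G}$, the set of primitive edges of $\S_{\mathbb{G}}$ is exactly $\{e_i - e_j : (i,j) \in E(\mathbb{G})\}$, the support vectors of the graphical arrangement of $\mathbb{G}$.

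For the basis and positivity properties, I would invoke Postnikov's result that $\{\Delta_I : I \subseteq [n]\}$ is the $y$-basis of the cone of generalized permutohedra modulo translation. Restricting $I$ to cliques of $\mathbb{G}$ yields a basis of the subcone of generalized permutohedra whose edges lie in the graphical arrangement, which is the ambient vector space relevant to Section~\ref{sec:main}. Positivity is then checked by fixing the standard linear order on $[n]$ and orienting each $e_i - e_j$ (with $i<j$) accordingly; every $\Delta_I$ is coherently oriented under this choice, satisfying the sign condition required by the definition of positive basis.

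For fullness, I would show that every potential basis element with edges in $\{e_i - e_j : (i,j) \in E(\mathbb{G})\}$ is a translate of some $\Delta_I$ for a clique $I$, and hence already in $\S_{\mathbb{G}}$. Concretely, this reduces to showing that the Minkowski-indecomposable polytopes with edges in the graphical arrangement of $\mathbb{G}$ are (up to translation) exactly the $\Delta_I$ for $I$ a clique. I would prove this by induction on the number of vertices, using the fact that an edge $v - w = e_i - e_j$ of such a polytope $P$ forces the local face structure at the two endpoints to be compatible, propagating the clique constraint across all vertices of $P$.

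The main obstacle will be the fullness step: turning the abstract maximality condition of Section~\ref{sec:main} into the concrete claim above about indecomposable polytopes, and ensuring the vertex-induction fits cleanly with the formal definitions. I expect the matroid-theoretic structure underlying graphical arrangements (via acyclic orientations of cliques) to provide the cleanest reconciliation, and also to make the sign-compatibility check for positivity routine once the bijection between $\S_{\mathbb{G}}$ and cliques of $\mathbb{G}$ is in place.
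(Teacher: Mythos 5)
Your outline for the basis and positivity steps is close in spirit to the paper: the paper proves positivity of $\S_{K_n}$ directly (Lemma~\ref{lem:simplex}) and then observes that $\S_\mathbb{G} \subseteq \S_{K_n}$ is a hierarchical subset, so positivity is inherited via Corollary~\ref{cor:subset.positive}. Do note, however, that the orientation in Definition~\ref{def:positive.basis} is a sign assignment on the rows of the $H$-matrix, i.e.\ on the \emph{outer normal vectors} $v_I = \sum_{i\in I}e_i$, not on the edge directions $e_i - e_j$; ``fix a linear order on $[n]$ and orient the edges'' is not the right object to orient. The check that $\S_{K_n}$ satisfies the criterion of Proposition~\ref{prop:positive.basis} is what the paper actually does, and it is phrased entirely in terms of the $v_I$'s.

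The fullness step is where the argument genuinely breaks. You propose to prove fullness by showing that the Minkowski-indecomposable polytopes whose edges lie in the graphical arrangement of $\mathbb{G}$ are exactly the clique simplices $\Delta_I$. This is false already for $\mathbb{G} = K_n$ with $n\geq 4$: the hypersimplex $\Delta_{2,4}=\conv\{e_i+e_j : 1\le i<j\le 4\}$ has every edge parallel to some $e_i-e_j$, yet it is not a simplex and is Minkowski indecomposable. More structurally, if all such indecomposables were simplices, then every generalized permutohedron would be a \emph{nonnegative} Minkowski sum of simplices, which contradicts the well-known fact that the $y$-coordinates in Postnikov's representation $P = \sum_I y_I(P)\Delta_I$ are frequently negative. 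Fullness (Definition~\ref{defn:basis} / Corollary~\ref{cor:full.basis}) is precisely the statement $\Z\S_\mathbb{G} = \mathbb{E}\S_\mathbb{G}$, and the signed Minkowski sums cannot be avoided. The paper handles this directly: given a polytope $P$ with edges parallel only to $e_i-e_j$ for $(i,j)\in E(\mathbb{G})$, it writes $P = \sum_{I} y_I(P)\Delta_I$ in the full $\S_{K_n}$-basis (using Lemma~\ref{lem:simplex}), and then shows $y_I(P)=0$ for every non-clique $I$ by a sign case analysis: a positive non-clique coefficient would force a forbidden edge in $P$, and if all non-clique coefficients are negative one forms $P' = P + \sum(-y_I)\Delta_I$ and derives a forbidden edge in $P'$. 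Your proposal has no mechanism to rule out the negative-coefficient terms, so the gap is real and not just a matter of cleanup.
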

\begin{proof}[Proof of Proposition \ref{prop:graphical}]
By Lemma \ref{lem:simplex}, $\S_{K_n}$ is a full positive basis. Now, $\mathbb{G}$ is a subgraph of $K_n$, so $\S_\mathbb{G}\subseteq \S_{K_n}$.
Clearly $\S_\mathbb{G}$ is hierarchical, so by Corollary~\ref{cor:subset.positive}, $\S_\mathbb{G}$ is a positive basis. It remains to show that it is full. It is sufficient to show that if $P$ is a lattice polytope whose edges are parallel to $e_i - e_j$ for $(i,j) \in e(\mathbb{G})$, then $P \in \Z\S_\mathbb{G}$. Suppose $P$ is a lattice polytope with such edge directions. By Lemma~\ref{lem:simplex}, $P \in \Z\S(K_n)$, so
$$ P = \sum_{I \subseteq [n]} y_I(P)\Delta_I \in \Z\S(K_n) $$
for unique constants $y_I(P) \in \Z$, $I \subseteq [n]$. Let 
$$\mathcal{I} = \{I \subseteq [n]: \Delta_I \notin \S_\mathbb{G}, y_I(P) \neq 0\}.$$
If $\mathcal{I} = \emptyset$, then we are done. Otherwise, for contradiction, consider two cases. 
\begin{itemize}
  \item There exists some $I \in \mathcal{I}$ such that $y_I(P) > 0$. Then there is some edge $(i,j) \notin e(\mathbb{G})$ such that $i,j \in I$. But $P$ must contain an edge parallel to $e_i - e_j$, a contradiction. 
  \item For all $I \in \mathcal{I}$, $y_I(P) < 0$. Let 
$$P' := P + \sum_{I \in \mathcal{I}} (-y_I(P))\Delta_I.$$ 
Then $P' = \sum_{L \in \S_\mathbb{G}}y_L(P)\Delta_L$, so edges in $P'$ are parallel to $e_i - e_j$ for $(i,j) \in e(\mathbb{G})$. On the other hand, since $\mathcal{I} \neq \emptyset$, there exists some $I \in \mathcal{I}$ such that $\Delta_I \leq P'$, so $P'$ must contain an edge parallel to $e_i - e_j$ for some $(i,j) \notin e(\mathbb{G})$, a contradiction. 
\end{itemize}
Therefore, one must have $P \in \Z\S_\mathbb{G}$. So $\S_\mathbb{G}$ is a full basis.
\end{proof}

Next we show that there are full positive bases in $\Z^2$ starting from any given set of primitive edges $\S^1$. As there are many full positive bases for a given set $\S^1$, we deliberately present a non-constructive proof. In specific examples, it is not difficult to construct a given full positive basis in $\Z^2$, see Example \ref{ex:d2}.

\begin{proposition}\label{prop:n2}
Let $\S^1$ be a set of primitive edges in $\Z^2$. There exists a full positive basis $\S$ of $\mathbb{E} \S^1$. 
\end{proposition}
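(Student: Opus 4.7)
The plan is to decompose $\Lambda := \Z\overline{B(\S^1)}$ internally and choose $\S$ as the union of the primitive edges $\S^1$, a complementary set of lattice polygons with no pair of parallel edges, and two lattice points. Set $k := |\S^1|$ and assume $k \geq 2$ (the cases $k \leq 1$ are handled directly by inspection). Let $H$ denote the $H$-matrix of the zonotope $Z = \sum_{i=1}^{k} w_i$, with $2k$ rows $\pm v_1, \ldots, \pm v_k$ the primitive outer normals. First I would verify that $\Lambda \subseteq \Z^{2k}$ has rank $2k$, decomposing (up to finite index) into the rank-$2$ translation sublattice $\Ima_\Z(H) = \{Hv : v \in \Z^2\}$ arising from $b^H(\{v\}) = Hv$, and a shape lattice of rank $2k - 2$ consisting of polygons modulo translation (since a polygon with outer normals among $\pm v_i$ is specified by $2k$ support values subject to two translation degrees of freedom). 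The vectors $b^H(w_1), \ldots, b^H(w_k)$ are linearly independent modulo $\Ima_\Z(H)$, spanning a rank-$k$ \emph{zonotope sublattice} inside the shape lattice.

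Next I would pick an orientation $\tau$ of the rows of $H$ such that the $k$ normals in $H^\tau_+$ are not contained in any open half-plane of $\R^2$; this choice is possible precisely when $k \geq 3$, and the requirement is vacuous for $k = 2$ since no polygons are to be added. Lattice polygons whose outer edge normals all lie in $H^\tau_+$ are parameterized by $k$ non-negative edge lengths subject to a two-dimensional vector-closure constraint, so their shape classes form a rank-$(k-2)$ sublattice of the shape lattice. Since any such $2$-dimensional polygon uses only one normal from each $\pm$ pair and therefore fails to be centrally symmetric, it cannot equal a signed Minkowski combination of primitive edges, so this sublattice is transverse to the zonotope sublattice. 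A dimension count then yields the existence of $k - 2$ lattice polygons $P_1, \ldots, P_{k-2}$ with $H^\tau_+$-edges such that $\{b^H(w_i)\}_{i} \cup \{b^H(P_j)\}_j$ is a $\Z$-basis of the shape lattice. Taking
\[ \S := \S^1 \cup \{P_1, \ldots, P_{k-2}\} \cup \{\{e_1\}, \{e_2\}\} \]
then gives $|\S| = 2k$, and the contributions $He_1, He_2$ from the two points generate $\Ima_\Z(H)$, so $B(\S)$ is a $\Z$-basis of $\Lambda$.

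Verification of the remaining conditions is then routine. The set $\S$ is positive with orientation $\tau$ because each $P_j$ has its edges supported in $H^\tau_+$ by construction, while the edges and points have no proper faces, so the criterion of Proposition \ref{prop:positive.basis} holds. It is hierarchical because proper faces of each $P_j$ are integer multiples of primitive edges in $\S^1 \subseteq \S$, while the other elements have no proper faces. The identity $\mathbb{E}\S = \mathbb{E}\S^1$ follows since every edge direction appearing in $\S$ is parallel to one in $\S^1$.

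The main obstacle is the non-constructive existence step itself: the rank-and-transversality argument produces a $\Q$-basis of the shape lattice cleanly, but obtaining an actual unimodular $\Z$-basis requires that lattice polygons with $H^\tau_+$-edges saturate the complementary rank-$(k-2)$ sublattice, rather than a finite-index subgroup thereof. A generic choice within the integer $(k-2)$-parameter family of such polygons achieves this saturation, and the absence of a canonical selection is precisely what motivates presenting the argument non-constructively.
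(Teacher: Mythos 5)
Your high-level plan closely tracks the paper's: choose an orientation $\tau$ so that $H^\tau_+$ spans $\R^2$, and build $\S$ from $\S^1$ together with polygons whose edge normals lie in $H^\tau_+$. The difference lies entirely in how you justify that the resulting $B(\S)$ generates the full lattice, and this is where there is a genuine gap.

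Your argument proceeds by a rank count (translation sublattice of rank $2$, zonotope sublattice of rank $k$, an asserted rank-$(k-2)$ polygon sublattice) plus $\Q$-transversality via central symmetry, and then invokes ``generic saturation.'' You yourself flag this last step as the main obstacle, and it is not resolved. There is no ``generic choice'' to be made: whether the sublattice generated by $b^H$-vectors of polygons in $\mathcal{P}^\tau$ together with those of $\S^1$ and $\Ima_\Z(H)$ equals all of $\Z\overline{B(\S^1)}$ is a fixed question about a fixed set of generators, not something a careful selection of a $(k-2)$-element subset can fix. If $\Z\mathcal{P}^\tau + \Z B(\S^1) + \Ima_\Z(H)$ were a proper finite-index sublattice, no choice of $P_1,\dots,P_{k-2}$ from $\mathcal{P}^\tau$ could repair this. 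The paper closes precisely this gap with a constructive reduction: for each outer normal $c_v(-v)$ of an arbitrary $P \in \mathbb{E}\S^1$ with $-v \in H^\tau_-$, one picks a polygon $Q(v) \in \mathcal{P}^\tau$ with a matching $v$-edge and shows
$P + \sum_v Q(v) = P' + \sum_v c_v\,e(v)$
with $P' \in \mathcal{P}^\tau$, giving $b^H(P) \in \Z B(\mathcal{P}^\tau) + \Z B(\S^1) + \Ima_\Z(H)$ directly, with no rank or index considerations. This is the key idea missing from your argument. In addition, the asserted rank-$(k-2)$ of the polygon shape sublattice is not established: the $b^H$-vector of a polygon in $\mathcal{P}^\tau$ depends piecewise-linearly (not linearly) on its edge lengths through the entries corresponding to $H^\tau_-$ normals, so the $\Z$-span of these vectors need not be what a naive parameter count suggests, and the claim requires an argument.

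Two smaller points. First, including the points $\{e_1\}, \{e_2\}$ in $\S$ is at odds with the paper's framework: $\mathbb{N}\S$, $\Z\S$ are defined modulo translation equivalence $\equiv$, and Proposition~\ref{prop:factor.unit} absorbs translations via the separate ``$+\Ima_\Z(H)$'' term, so the translation sublattice is not supposed to be represented by elements of $\S$; moreover, two translation-equivalent singletons both reduce to $0$ modulo $\Ima_\Z(H)$, destroying linear independence in the sense relevant to unique factorization (Corollary~\ref{cor:unique.unit}). Second, your base case $k \leq 1$ differs from the paper's ($k \leq 2$, with $\S = \S^1$), and your $k = 2$ case should also reduce to $\S = \S^1$ rather than adding points.
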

\begin{proof}
If $\S^1$ has cardinality one or two, take $\S = \S^1$ and the result holds trivially. Now suppose $\S^1$ consists of at least three edges. The row vectors of $H(\S^1)$ consists of primitive vectors in $\Z^2$ which are orthogonal to those in $\S^1$. Since $\S^1$ has at least three edges, one can choose an orientation $\tau$ such that $\spanv_\R(H^\tau_+) = \R^2$. Let $\mathcal{P}^\tau$ be the set of all full-dimensional lattice polygons whose outer normal vectors are nonnegative integer multiples of those in $H^\tau_+$. Choose $\S' \subseteq \mathcal{P}^\tau$ so that $\S'$ is a basis for $\Z\mathcal{P}^\tau$. Set $\S = \S' \cup \S^1$. Then $\S$ is a hierarchical basis. We claim that $\S$ in fact generates $\mathbb{E}\S^1$. Indeed, let $P$ be a polygon in $\mathbb{E}\S^1$. Let $v_-(P)$ be the set of outer normal vectors of $P$ that are positive integer multiples of vectors in $H^\tau_-$. If $v_-(P) = \emptyset$, then $P \in \mathcal{P}^\tau$, so $P \in \Z\S$. Otherwise, for each~$c_v \cdot (-v) \in v_-(P)$ with $-v \in H^\tau_-$, there is a polygon $Q(v) \in \mathcal{P}^\tau$ such that $c_v \cdot v$ is an outer normal vector of $Q(v)$. Then
\begin{equation}\label{eqn:pq}
P + \sum_{v \in v_-(P)}Q(v) = P' + \sum_{v \in v_-(P)} c_v \cdot e(v),
\end{equation}
where $e(v) \in \S^1$ is the edge orthogonal to $v$, and $P'$ is some polytope whose outer normal vectors are all in $H^\tau_+$. Thus the RHS of (\ref{eqn:pq}) is in $\Z\S$, and each $Q(v)$ is in $\Z\S$, so~$P \in \Z\S$. Thus $\S$ is a full positive basis of $\mathbb{E}\S^1$. 
\end{proof}

\begin{corollary}[Rational Factorization for Bivariates]\label{cor:main.rational}
Any bivariate tropical polynomial is rationally factorizable into a product of affine monomials of the form $(x,y) \mapsto c_0\oplus x^{\odot a}\odot y^{\odot b}$, $a,b \in \mathbb{N}, c_0 \in \R$.
\end{corollary}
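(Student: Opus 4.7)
The plan is to reduce the corollary to a direct application of Theorem~\ref{thm:SS} combined with Proposition~\ref{prop:n2}. First I would record the elementary observation that, for any collection $\S$ of lattice segments in $\Z^2$, the $\S$-units are---up to an overall additive shift---exactly the affine monomials $c_0\oplus x^{\odot a}\odot y^{\odot b}$. Indeed, such a polynomial has exactly two monomials sitting at the endpoints of the segment from $(0,0)$ to $(a,b)$, so its Newton polytope is that segment and the induced subdivision is trivial.

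Next, given a bivariate tropical polynomial $f$, I would form the finite set $E(f)$ of primitive edge directions appearing in the regular subdivision $\Delta_f$. By Proposition~\ref{prop:n2}, there is a full positive basis $\S\subset\Z^2$ whose set of primitive edges contains $E(f)$. By construction every edge of $\Delta_f$ is an integer multiple of some edge of $\S$, so the last sentence of Theorem~\ref{thm:SS} places $f\in\ZS=\ES$. Unwinding the definition of $\ZS$, this produces $h,g\in\NS$ with $f\odot g=h$, hence $f=h\odot g^{\odot -1}$, where both $h$ and $g$ are products of $\S$-units, i.e.\ products of affine monomials of the prescribed form.

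The substantive content is packed entirely into Proposition~\ref{prop:n2} and Theorem~\ref{thm:SS}; the corollary reduces to matching definitions. The only genuinely necessary ingredient beyond those two statements is the realization that \emph{strong} rationality is what we need---we must have $g\in\NS$, not merely $f\odot g\in\NS$, in order to express the denominator as a product of affine monomials---and this is precisely what the equality $\ZS=\ES$ from Theorem~\ref{thm:SS} supplies once $\S$ is full. I therefore expect no real obstacle on the corollary side; the substantive difficulty lies upstream, in producing a simultaneously full and positive basis in $\Z^2$ for an arbitrary prescribed set of primitive directions (Proposition~\ref{prop:n2}), which one presumably proves by ordering the directions cyclically by angle and exhibiting a compatible signed Minkowski structure.
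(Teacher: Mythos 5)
Your argument starts on exactly the same track as the paper's: invoke Proposition~\ref{prop:n2} to obtain a full positive basis $\S$ with $\S^1 = e(f)$, then use the last sentence of Theorem~\ref{thm:SS} to conclude $f\in\ZS=\ES$. However, your final step contains a genuine gap. You conclude that $h,g\in\NS$ are products of $\S$-units ``i.e.\ products of affine monomials of the prescribed form,'' but this silently applies your opening observation---that $\S$-units are affine monomials \emph{when $\S$ consists of segments}---to the wrong $\S$. The full positive basis $\S$ produced by Proposition~\ref{prop:n2} necessarily contains full-dimensional lattice polygons (see Figure~\ref{fig:s}), not just the segments of $\S^1$. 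A unit whose Newton polytope is a triangle or pentagon is not an affine monomial $c_0\oplus x^{\odot a}\odot y^{\odot b}$, so landing in $\NS$ does not by itself deliver the corollary.

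The paper closes this gap with a further step you omitted: it invokes Corollary~\ref{cor:es1}, which says that $\S^1\subseteq\S$ implies $\ES=\mathbb{E}[\S^1]$. Concretely, each $\S$-unit $f_i$ with a polygonal Newton polytope is multiplied by a unit $g_i$ chosen so that $\newt(f_i)+\newt(g_i)$ is a dilate of the zonotope $\Sigma^1=\sum_{e\in\S^1}e$; the product $f_i\odot g_i$ then lies in $\mathbb{N}[\S^1]$ and factors into units whose Newton polytopes are segments, hence into affine monomials. Only after this ``zonotope-completion'' step does one obtain a numerator $f\odot g'$ that is a product of affine monomials, which is what the corollary asserts. (Note also that this step pushes additional units into the denominator $g'$, and the paper does not claim that $g'$ is itself a product of affine monomials---it establishes $f\in\mathbb{E}[\S^1]$, not $f\in\mathbb{Z}[\S^1]$; your proposal's stronger phrasing ``both $h$ and $g$ are products of affine monomials'' is not what this route actually yields.) So you need Corollary~\ref{cor:es1}, or an equivalent argument replacing $\S$-units by $\S^1$-units, to complete the proof.
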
 
\begin{proof}[Proof of Corollary \ref{cor:main.rational}]
Let $e(f)$ be the set of primitive edges in $\Delta_f$, $\Sigma$ be their Minkowski sum. By Proposition \ref{prop:n2}, there exists a full positive basis $\S$ such that~$\S^1 = e(f)$, so $f \in \ZS = \ES$. Note that by definition, $\mathbb{E}[\S^1] = \mathbb{E}[\S]$. Thus $f \in \mathbb{E}[\S^1]$, that is, $f$ is rationally factorizable into a product of affine monomials whose Newton polygon are integer multiples of the edges in $\S^1$. 
\end{proof}

\section{Algorithms}\label{sec:algorithm}

In this section we discuss various algorithms for factorization, rational factorization and their implementations. Without loss generality, we assume that the tropical polynomials of interest are homogeneous. Given such a tropical polynomial $f$ and a finite set of lattice polytopes $\S$, we supply algorithms to do the following

\begin{enumerate}
  \item Decide whether $\S$ is a positive basis (Algorithm \ref{alg:positive.basis}).
  \item Given $\S$ a positive basis, decide whether $f\in \ZS \backslash \NS$, $f \in \NS$, or neither (Algorithm \ref{alg:gen-test}).
  \item If $f \in \NS$, produce the unique factorization for $f$ (Algorithm \ref{alg:gen-NS}).
  \item If $f \in \ZS \backslash \NS$, produce a $g \in \NS$ such that $f\odot g\in \NS$ (Algorithm \ref{alg:gen-ZS}).
\end{enumerate} 

There are a few subroutines of polytopes used in our algorithms. All these subroutines can be done using the software {\tt Polymake}\cite{polymake}. They include (all input $P$ are in V-representation)

\begin{itemize}
	\item Input a polytope $P$, return its H-representation. In particular, its H-matrix $H(P)$.
	\item Input a polytope $P$, return the set of its faces $Faces(P)$, each in V-representation.
	\item Input a polynomial $f$, output its Newton polytope $\Newt(f)$.
	\item Input a tropical polynomial $f$ with a weight at each vertex of $P$, return the maximal polytopes in its regular subdivision with respect to the weights, denoted $\Delta_{f}$.
	\item Input a polytope $P$ and a nonzero vector $v$, return $\face_{v}(P)$.
\end{itemize}

Algorithm \ref{alg:positive.basis} is based on Proposition \ref{prop:positive.basis}. As a by-product, it also computes the $H$-matrix of $\S$, which is needed for subsequent algorithms. Computing $H(\S)$ is in fact the most intensive part, as this matrix could have exponentially many row vectors. The other algorithms are based on Proposition \ref{prop:factor.unit} and the proofs of Theorems \ref{thm:main} and \ref{thm:SS}. A crucial difference to the proof is that these algorithms use the stricter notion of $=_2$ instead of $=_3$ for equality of two tropical polynomials. This means we need to keep track of translations of the regular subdivisions and translations of the Legendre transform. A major part of the algorithms is spelling out the details of this step.

\begin{algorithm}[H]
	\begin{algorithmic}[1]
	\caption{\texttt{Is Positive Basis}} \label{alg:positive.basis}
	\Function{IsPositiveBasis}{$\S$} \\
	\textbf{Input}: $\S$, a finite set of lattice polytopes, each in V-representation \\
	\textbf{Output}: True if $\S$ is a positive basis; False otherwise
	\For{$F$ in $Faces(\S)$}
		\If{not $F \in \S$} 
		\EndIf
	\EndFor
	\State $S \leftarrow \sum_{P \in \S} P$
	\State $H \leftarrow H(S)$
	\If{$rank(H) \ne$ number of rows in $H$} \textbf{return} False
	\EndIf
	\For{row vector $v$ in $H$}
		\State $f(v) \leftarrow \emptyset$
		\For{$P$ in $\S$}
			\If{$\face_{v}(P) \in Faces(P)$ and $\face_{v}(P)\ne P$}
				\State $f(v) \leftarrow f(v) \cup \{P\}$
			\EndIf
		\EndFor
	\EndFor
	\For{row vector $v$ in $H$}
		\If{$f(v)\cap f(-v) \ne \emptyset$} \textbf{return} False
		\EndIf
	\EndFor
	\State \textbf{return} True
	\EndFunction
	\end{algorithmic}
\end{algorithm}

\begin{algorithm}[H]
	\begin{algorithmic}[1]
		\caption{\texttt{Membership test for $\ZS$ and $\NS$}} \label{alg:gen-test}
		\Function{Membership}{$\S, f$} \\
		\textbf{Input}: A tropical polynomial $f$ and a positive basis $\S$ \\
		\textbf{Output}: $f\notin \ZS$; $f\in \ZS/\NS$; $f\in \NS$ with decomposition of all polytopes in $\Delta_{f}$ as Minkowski sums of polytopes in $\S$
		\State $H \leftarrow H(\S)$
		\State $status \leftarrow 1$
		\For{$\sigma \in \Delta_{f}$}
			\State $b(\sigma)\leftarrow v(H, \sigma)$		
			\If{$b(\sigma)\notin \Z\B(\S)$} \textbf{return} $f\notin \ZS$
			\ElsIf{$b(\sigma)\notin \N\B(\S)$}
				\State $status \leftarrow 0$
			\Else
				\State Write $b(\sigma) = \sum_{P\in \S}{\sigma_{P}\cdot P}$, where each $\sigma_{P}\in \N$
			\EndIf
		\EndFor
		\If{$status = 0$}
			\State \textbf{return} $f\in \ZS/\NS$
		\Else
			\State \textbf{return} $\sigma_{P}$ for all $\sigma \in \Delta_{f}$ and $P\in \S$
		\EndIf
		\EndFunction
	\end{algorithmic}
\end{algorithm}

\begin{algorithm}[H]
	\begin{algorithmic}[1]
		\caption{\texttt{Factorization for $\NS$}} \label{alg:gen-NS}
		\Function{FactorN}{$\S, h$} \\
		\textbf{Input}: a positive basis $\S$ and a tropical polynomial $h\in \NS$ \\
		\textbf{Output}: $\S$-units $h_{i}$ with multiplicities $m_{i}$ such that $h=_{2} \odot_{i}{h_{i}^{\odot m_{i}}}$.
		\State $H \leftarrow H(\S)$
		\State $(\sigma_{P})_{\sigma \in \Delta_{h}, P\in \S} \leftarrow Membership(\S, h)$
		\State $s \leftarrow \emptyset$
		\For{vertex $V\in \sigma$}
			\State $s \leftarrow s\cup \{\sum_{i=1}^{n}{V_{i}\cdot \sigma_{i}} = h(V)\}$
		\EndFor
		\State Solve $s$ for the unique solution $(\sigma_{1},\ldots,\sigma_{n})$ \Comment{uniqueness guaranteed by the fact that $\sigma$ has maximal dimension)}
		\State $l_{\sigma} \leftarrow$ linear function $\R^{n}\to \R$ with $l_{\sigma}(x_{1},\ldots,x_{n}) = \sum_{i=1}^{n}{\sigma_{i}\cdot x_{i}}$
		\State $\mathcal{O}\leftarrow \emptyset$ \Comment{output of factors with multiplicity}
		\For{$P\in \S$}
			\For{$\sigma \in \Delta_{h}$}
				\State $a_{\sigma, P} \leftarrow \sigma_{P}$
			\EndFor
		\EndFor
		\While{$\exists a_{\sigma, P}>0$}
			\State $S\leftarrow$ an element of $\{P\in \S \mid \exists \sigma\in \Delta_{h} \text{ such that } a_{\sigma, P}>0 \}$ with maximal dimension
			\State $\eta \leftarrow$ a polytope such that $a_{\eta, S}>0$
			\State Add the following $\S$-unit to $\mathcal{O}$:
				\[x\mapsto \max_{\text{vectex } v \in \eta}{\left(v\cdot x + l_{\eta}(v)\right)} \]
			\For{$\sigma \in \Delta_{h}$}
				\State $J_{S, \sigma} \leftarrow \conv\left(\argmax_{\text{vectex } v \in \eta}{\left(l_{\eta}(v) - l_{\sigma}(v)\right)}\right)$
				\State $b_{S, \sigma} \leftarrow v(H, J_{S, \sigma})$ 
				\State Write $b_{S, \sigma} = \sum_{P\in \S}{(J_{S, \sigma})_{P}\cdot P}$ \Comment{$b_{S, \sigma} \in \N\B(\S)$}
				\For{$P\in \S$}
					\State $a_{\sigma, P} \leftarrow a_{\sigma, P} - (J_{S, \sigma})_{P}$
				\EndFor
			\EndFor
		\EndWhile
		\State \textbf{Return} $\mathcal{O}$
		\EndFunction
	\end{algorithmic}
\end{algorithm}

\begin{algorithm}
	\begin{algorithmic}[1]
		\caption{\texttt{Factorization for $\ZS$}} \label{alg:gen-ZS}
		\Function{FactorZ}{$\S, f$} \\
		\textbf{Input}: a positive basis $\S$ and a tropical polynomial $f\in \ZS$ \\
		\textbf{Output}: a tropical polynomial $g\in \NS$ such that $f\odot g\in \NS$.
		\State $H \leftarrow H(\S)$
		\State $m \leftarrow Membership(\S,f)$
		\If{$m = f\in \ZS/\NS$}
			\For{$\sigma \in \Delta_{f}$}
				\State $b(\sigma)\leftarrow v(H, \sigma)$
				\State Write $b(\sigma) = \sum_{P\in \S}{\sigma_{P}\cdot P}$, where each $\sigma_{P}\in \Z$
			\EndFor
			\State $L \leftarrow \emptyset$
			\For{$\sigma \in \Delta_{f}$}
				\For{$P\in \S$}
					\If{$\sigma_{P}<0$}
						\State $s \leftarrow \emptyset$
						\For{vertex $V\in \sigma$}
							\State $s \leftarrow s\cup \{\sum_{i=1}^{n}{V_{i}\cdot \sigma_{i}} = f(V)\}$
						\EndFor
						\State Solve $s$ for the unique solution $(\sigma_{1},\ldots,\sigma_{n})$ \Comment{uniqueness guaranteed by the fact that $\sigma$ has maximal dimension)}
						\State $l_{\sigma} \leftarrow$ linear function $\R^{n}\to \R$ with 
						\[l_{\sigma}(x_{1},\ldots,x_{n}) = \sum_{i=1}^{n}{\sigma_{i}\cdot x_{i}}\]
						\State $IsNew \leftarrow 1$
						\For{$(l, mt) \in L$}
							\If{$l_{\sigma} =_{2} l$}
								\State $IsNew \leftarrow 0$
								\If{$-\sigma_{P}>mt$}
									\State Replace $(l,mt)$ by $(l, -\sigma_{P})$ in $L$
								\EndIf
							\EndIf
						\EndFor
						\If{$IsNew=1$}
							\State Append $(l_{\sigma}, -\sigma_{P})$ to $L$
						\EndIf
					\EndIf
				\EndFor
			\EndFor
			\State \[g\leftarrow \bigodot_{(l,mt) \in L}{l^{\odot mt}}\]
		\Else
			\State $g\leftarrow 0$
		\EndIf
		\EndFunction
	\end{algorithmic}
\end{algorithm}

\begin{remark}
When $\S = \S_{K_n}$ or more generally $\S_\mathbb{G}$, there are two major computation shortcuts for Algorithms \ref{alg:gen-NS} and~\ref{alg:gen-ZS}. Firstly, as $\S$ is a full positive basis, it is easy to check if a polytope is a signed Minkowski sum of polytopes in $\S$. Secondly, the $H$-matrix is highly symmetric, and in this case, $b^H(P)$ for some polytope $P$ can be computed by M\"{o}bius inversion \cite[Proposition 2.4]{Ardila10}. 
\end{remark}

\section{Numerical Examples}\label{sec:example}

\begin{example}[Rational polynomials from spanning trees]\label{exa:k4-e}
This example is adapted from the family of $M$-convex functions given in \cite[Example 6.27]{Murota03}. Let $G$ be the edge-weighted graph on $n = 5$ edges shown below.
\begin{center}
\includegraphics[width=0.15\textwidth]{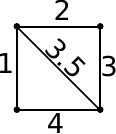}
\end{center}
Let $\Theta$ be the set of spanning trees of $G$. Define the following tropical polynomial $f_G: \R^n \to \R$
	\begin{equation}\label{eqn:tree}
	f_{G}(x)=\mathop{\bigoplus}_{T\in \Theta}{\left[ \left(-\bigodot_{e_{i}\in E(T)}{w_{i}}\right) \odot \bigodot_{e_{i}\in E(T)}{x_{i}} \right]}.
	\end{equation}
	Explicitly, $f_{G}(x)$ is the maximum of the following
	\begin{eqnarray*}
&x_1+x_2+x_3-6,x_1+x_2+x_4-7,x_1+x_3+x_4-8, \\ 
&x_2+x_3+x_4-9, x_1+x_2+x_5-6.5, x_1+x_3+x_5-7.5,\\
&x_2+x_4+x_5-9.5,x_3+x_4+x_5-10.5.
	\end{eqnarray*}
	
We find that $f_G$ is a unit, with
	\begin{equation}\label{eqn:7-4}
	\begin{split}
	& \Newt(f_{G})=\Delta_{\{1,4\}}+\Delta_{\{2,3\}}+\Delta_{\{1,2,5\}}+\Delta_{\{1,3,5\}}+\Delta_{\{2,4,5\}} \\
	& +\Delta_{\{3,4,5\}}-\Delta_{\{1,2,3,5\}}-\Delta_{\{1,2,4,5\}}-\Delta_{\{1,3,4,5\}}-\Delta_{\{2,3,4,5\}}+\Delta_{\{1,2,3,4,5\}}.
	\end{split}
	\end{equation}
	
	Algorithm \ref{alg:gen-ZS} outputs the following
	\[\begin{split}
	&g_G(x) = \max(x_{1},x_{2}-1,x_{3}-2,x_{5}-2.5) +\max(x_{1},x_{2}-1,x_{4}-3,x_{5}-2.5) \\
	&+\max(x_{1},x_{3}-2,x_{4}-3,x_{5}-2.5) +\max(x_{2},x_{3}-1,x_{4}-2,x_{5}-1.5).
	\end{split}
	\]
One can check that $f_{G}\odot g_{G}$ is still a unit, and $\Newt(f_{G}\odot g_{G})$ is a Minkowski sum of the $7$ simplices with positive coefficients amongst those in (\ref{eqn:7-4}). By Algorithm \ref{alg:gen-NS}, we get its factorization as follows:

	\begin{equation*}
	\begin{split}
f_{G}\odot g_{G}(x) 	& = \max(x_1,x_2-1,x_3-2,x_4-3,x_5-\frac{5}{2})\\
	& +\max(x_1,x_2-1,x_5-\frac{5}{2})+\max(x_1,x_3-2,x_5-\frac{5}{2})\\
	& +\max(x_2,x_4-2,x_5-\frac{3}{2})+\max(x_3,x_4-1,x_5-1/2)\\
	& +\max(x_1,x_4-3)+\max(x_2,x_3-1).
	\end{split}
	\end{equation*} 
\end{example}

Next we present a non-unit tropical polynomial $f\in \mathbb{Z}[\mathcal{S}_{K_3}]\backslash\mathbb{N}[\mathcal{S}_{K_3}]$. This example comes from a quadratic $M$-convex function in \cite[Example 2.10]{Murota03}.

\begin{example}\label{exa:quad}
	Let $f: \R^3 \to \R$ be a homogeneous quadric tropical polynomial that is the maximum of the following
	\begin{equation*}
	\begin{split}
& 3x_1-18, 3x_2-45, 3x_3-54, 3x_4-81, x_1+2x_2-34, x_1+2x_3-34, \\
	& x_1+2x_4-42, 2x_1+x_2-25, 2x_1+x_3-22, 2x_1+x_4-21, x_2+2x_3-45, \\
	& x_2+2x_4-53, 2x_2+x_3-42, 2x_2+x_4-41, x_3+2x_4-54, 2x_3+x_4-45, \\
	& x_1+x_2+x_3-31, x_1+x_2+x_4-30, x_1+x_3+x_4-29, x_2+x_3+x_4-40.
	\end{split}
	\end{equation*}
	
	The Newton polytope of $f$ is 3 times the standard simplex $\R^4$. Its regular subdivision consists of 14 maximal cells, which are all in $\Z\mathcal{S}_4$. Their signed Minkowski sum decompositions are
	\begin{equation*}
	\begin{split}
        & 1\cdot \Delta_{\{1\}} + 1\cdot \Delta_{\{1,2\}} + 1\cdot \Delta_{\{2,3,4\}},\\
		& 1\cdot \Delta_{\{1\}} + 1\cdot \Delta_{\{4\}} + 1\cdot \Delta_{\{1,2,3,4\}},\\
		& 1\cdot \Delta_{\{1,2\}} + 1\cdot \Delta_{\{2,3\}} + 1\cdot \Delta_{\{2,4\}} + 1\cdot \Delta_{\{3,4\}} -1\cdot \Delta_{\{2,3,4\}},\\
		& 1\cdot \Delta_{\{3\}} + 1\cdot \Delta_{\{1,2\}} + 1\cdot \Delta_{\{2,3,4\}},\\
		& 1\cdot \Delta_{\{3\}} + 1\cdot \Delta_{\{3,4\}} + 1\cdot \Delta_{\{1,2,3\}} + 1\cdot \Delta_{\{1,2,4\}} -1\cdot \Delta_{\{1,2,3,4\}},\\
		& 2\cdot \Delta_{\{3\}} + 1\cdot \Delta_{\{1,2,3,4\}},\\
		& 2\cdot \Delta_{\{4\}} + 1\cdot \Delta_{\{1,2,3,4\}},\\
		& 1\cdot \Delta_{\{3\}} + 1\cdot \Delta_{\{4\}} + 1\cdot \Delta_{\{1,2,3,4\}},\\
		& 1\cdot \Delta_{\{4\}} + 1\cdot \Delta_{\{3,4\}} + 1\cdot \Delta_{\{1,2,3\}} + 1\cdot \Delta_{\{1,2,4\}} -1\cdot \Delta_{\{1,2,3,4\}},\\
		& 1\cdot \Delta_{\{4\}} + 1\cdot \Delta_{\{1,2\}} + 1\cdot \Delta_{\{2,3,4\}},\\
		& 1\cdot \Delta_{\{2\}} + 1\cdot \Delta_{\{1,2\}} + 1\cdot \Delta_{\{2,3,4\}},\\
		& 1\cdot \Delta_{\{1\}} + 1\cdot \Delta_{\{3\}} + 1\cdot \Delta_{\{1,2,3,4\}},\\
		& 1\cdot \Delta_{\{1\}} + 1\cdot \Delta_{\{3,4\}} + 1\cdot \Delta_{\{1,2,3\}} + 1\cdot \Delta_{\{1,2,4\}} -1\cdot \Delta_{\{1,2,3,4\}},\\
		& 2\cdot \Delta_{\{1\}} + 1\cdot \Delta_{\{1,2,3,4\}}.
	\end{split}
	\end{equation*}
	
	There are four terms with negative coefficients. Algorithm \ref{alg:gen-ZS} outputs $g$ a product of four units
	
	\begin{equation*}
		\begin{split}
		g(x)& = \max(x_{2},x_{3}-11,x_{4}-10) +\max(x_{1},x_{2}-11,x_{3}-15,x_{4}-11)\\
		+&\max(x_{1},x_{2}-11,x_{3}-12,x_{4}-25) +\max(x_{1},x_{2}-9,x_{3}-8,x_{4}-7).
		\end{split}
	\end{equation*}
	
Algorithm \ref{alg:gen-NS} gives the following factorization 
	
	\begin{equation*}
		\begin{split}
		&(f \odot g)(x) \\
		&= \max(x_1,x_2-9,x_3-12,x_4-7)+ \max(x_1,x_2-9,x_3-8,x_4-21)\\
		& +\max(x_1,x_2-11,x_3-12,x_4-39)+ \max(x_1,x_2-11,x_3-16,x_4-25)\\
		& +\max(x_1,x_2-11,x_3-20,x_4-11)+ \max(x_1,x_2-7,x_3-4,x_4-3)\\
		& +\max(x_2-2,x_3-1,x_4).
		\end{split}
	\end{equation*} 
\end{example}

\section{Summary and open questions}\label{sec:summary}

In this work, we showed that if a finite set of lattice polytopes $\S$ is a positive basis, then one has an efficient algorithm to decide if a given tropical polynomial is $\S$-factorizable or strong $\S$-rational. Furthermore, when $\S$ is a full positive basis, then one has an even better description of strong $\S$-rationals. The tropical rational factorization solved in this paper is part of the recent efforts on generalizing Minkowski sum algorithms to signed Minkowski sums \cite{emiris2016efficient}. We close with a number of interesting open questions in polyhedral computations and tropical geometry.

A major open problem in our paper is to find full positive bases. It is easy to construct and verify positive bases, or construct a full basis. However, to construct one that is simultaneously full and positive is more difficult. It is not clear whether a full positive basis always exists for any set of primitive edge directions. We conjecture this to be true.
\begin{conjecture}[The Full Positive Basis Conjecture]\label{conj:full}
Let $E$ be a set of primitive lattice edges in $\R^n$. There is a full positive basis $\S$ such that $\S^1 = E$.
\end{conjecture}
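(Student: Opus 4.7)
The plan is to generalize the two-dimensional construction of Proposition~\ref{prop:n2} to arbitrary $n$ by combining a generic orientation choice with induction on the dimension.

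First, I would fix a global orientation $\tau$: pick $u \in \R^n$ avoiding every hyperplane spanned by $n-1$ row vectors of the $H$-matrix $H$ of the zonotope $\sum_{e \in E} e$, and set $\tau(v) := \mathrm{sign}(u \cdot v)$ on each row $v$ of $H$. Genericity of $u$ guarantees both $\tau(v) = -\tau(-v)$ and $\spanv_\R(H^\tau_+) = \R^n$. Let $\mathcal{P}^\tau$ denote the set of full-dimensional lattice polytopes whose proper-face outer normals are nonnegative integer multiples of rows in $H^\tau_+$; by Lemma~\ref{lem:finite.generate} the $\Z$-span of its $H$-constants is finitely generated, so one can select a canonical basis $\S'_n \subseteq \mathcal{P}^\tau$ for $\Z\mathcal{P}^\tau$. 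By construction, no two distinct polytopes in $\mathcal{P}^\tau$ have opposing proper-face normals, so $\S'_n$ automatically satisfies the pairwise criterion of Proposition~\ref{prop:positive.basis} at the top dimension.

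Next I would enforce hierarchy by inducting on $n$, with the base case $n \leq 2$ supplied by Proposition~\ref{prop:n2}. For each $S \in \S'_n$ and each row $v \in H^\tau_+$ with $\face_v(S)$ a proper face, that face is a lattice polytope of dimension strictly less than $n$, living in the affine hyperplane orthogonal to $v$, with primitive edges $E_v \subseteq E$. By the inductive hypothesis there is a full positive basis $\S_v$ of $\mathbb{E}[E_v]$; I would insist that the orientation used to build $\S_v$ is the restriction of $\tau$ to $v^\perp$, so that it glues with the global choice. Taking
\[
\S \;:=\; \S'_n \;\cup\; E \;\cup\; \bigcup_{S \in \S'_n,\, v \in H^\tau_+} \S_v
\]
and iterating any further face-completion yields the candidate positive basis. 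Fullness then follows from Corollary~\ref{cor:full.basis}: given $P \in \mathbb{E}[E]$ with some outer normals in $H^\tau_+$ and others in $-H^\tau_+$, one cancels the ``negative'' normals by signed Minkowski addition with polytopes from $\mathcal{P}^\tau$ and edges in $E$, exactly as in equation~(\ref{eqn:pq}).

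The main obstacle is the three-way interplay between hierarchy, canonicality, and basis-hood under the dimension recursion. First, the union $\bigcup_{S,v}\S_v$ typically introduces $\Z$-linear dependencies among the $b^H$-vectors, since the same face-shape may arise from different $S \in \S'_n$; one must either quotient by these relations at the risk of losing canonicality, or enlarge $\S'_n$ to absorb them at the risk of losing hierarchy. Second, a polytope $S \in \S'_n$ may have a proper face that, viewed in ambient $\R^n$, is itself a Minkowski summand of $S$, forcing us to discard $S$ from the basis and restart; there is no \emph{a priori} bound on how many times this loop must be iterated. Third, the restriction of $\tau$ to $v^\perp$ may place an edge-normal of $E_v$ on an exceptional hyperplane of the lower-dimensional arrangement, so the inductive hypothesis cannot be applied verbatim and one must first perturb within $v^\perp$ while preserving compatibility with neighboring faces. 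Overcoming all three simultaneously for every prescribed edge set $E$ is precisely the substance of Conjecture~\ref{conj:full}.
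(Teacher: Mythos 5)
This statement is labeled a \emph{conjecture} in the paper, and the authors explicitly say earlier in the introduction that ``we do not know if they always exist for any prescribed set of edges.'' There is no proof in the paper to compare your attempt against, so what matters is whether your sketch actually closes the problem. It does not, and to your credit you say so yourself: your final paragraph is a list of unresolved obstacles, not a resolution of them.

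Your opening moves are reasonable and faithfully generalize Proposition~\ref{prop:n2}: a generic linear functional $u$ gives an orientation $\tau$ with $\spanv_\R(H^\tau_+)=\R^n$, and Lemma~\ref{lem:finite.generate} lets you pick a finite canonical $\S'_n$ generating $\Z\mathcal{P}^\tau$. The fullness argument via cancellation as in (\ref{eqn:pq}) is also the right idea. What breaks in going from $n=2$ to general $n$ is precisely the hierarchy condition: in $\R^2$ the only proper faces of polygons are edges and vertices, so $\S^1\subseteq\S$ already makes $\S$ hierarchical, and Proposition~\ref{prop:n2} never has to recurse. In $\R^n$ you must supply faces of all intermediate dimensions, and your three listed obstacles are indeed the genuine ones. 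Let me sharpen them. (1) Unioning $\S'_n$ with $\bigcup_{S,v}\S_v$ does not just ``risk'' introducing $\Z$-linear dependencies among $b^H$-vectors; it essentially always does, because a face shape that is a Minkowski summand of several top-dimensional $S$'s will reappear. Trimming to a $\Z$-basis may then delete a polytope that is a needed proper face of some retained $S$, destroying hierarchy. This tension between Definition~\ref{defn:basis} and hierarchy is not addressed. (2) Positivity of the union is also not established: Proposition~\ref{prop:positive.basis} must hold for pairs drawn from the \emph{entire} union, including a top-dimensional $S\in\S'_n$ paired against a lower-dimensional $Q\in\S_v$, and there is no argument that a consistent global $\tau$ exists once all dimensions are mixed. (3) The dimension recursion requires the restricted edge sets $E_v$ and the restricted orientation to satisfy the inductive hypothesis, but a face of a face may not be a face of $S$ supported by a row of $H(\S)$, so the recursion is not self-similar in the clean way you would need.

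In short, you have correctly reproduced the paper's proof strategy for $n=2$ and correctly identified why it does not extend; what you have is a research plan, not a proof, and the conjecture remains open.
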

If this conjecture holds, then for a given tropical polynomial $f$, let $e(f)$ be the set of primitive edges parallel to those in $\Delta_f$. Let $\S$ be a full positive basis such that $\S^1 = e(f)$. Then Theorem \ref{thm:SS} says that $f$ must be rationally factorizable, where both the numerator and denominator are $\S$-units. In other words, the conjecture implies that the following is true.
\begin{conjecture}[Conjecture for Rational Factorization]\label{cor:fundamental}
Any tropical polynomial in any number of variables is rationally factorizable into a product of affine monomials of the form
$$ (x_1,\dots,x_n) \mapsto \bigoplus_{i=0}^nc_i\oplus x^{\odot a_i}, $$
for some $a_i \in \mathbb{N}$
\end{conjecture}

It is easy to show that these two conjectures are true for $n = 2$ (cf. Proposition~\ref{prop:n2} and Corollary~\ref{cor:main.rational}). However, we do not know if either of them hold for $n \geq 3$. A sub-problem is the following the edge skeleton variant of the Minkowski reconstruction problem \cite{gritzmann1999algorithmic}.
\begin{open}
Characterize all lattice polytopes that can be constructed from a given set of edge directions. 
\end{open}
To the best of our knowledge, the solution for $n = 3$ is known \cite{martinez2019existence} but not in higher dimensions.  

Another major question is to generalize beyond the positive basis condition. As noted in Lemma \ref{lem:necessary}, a necessary condition for unique and local factorization is that $\S$ is a hierarchical basis. One could either ask for a tighter condition, that is, an if and only if characterization of unique and local factorization for $\NS$. Alternatively, one could sacrifice uniqueness and local factorization in favor of computational efficiency. This leads to the following concrete problem. 

\begin{open}
What other families of polytopes $\S$ where the membership problem for $\NS$ and $\ZS$ can be efficiently solved?
\end{open}

The above question can be taken further: if tropical factorization into units is a special case of the general tropical factorization problem of Speyer and Sturmfels \cite{Speyer09}, what are other special cases where irreducibility of tropical polynomials with respect to a given class can efficiently be determined?

\section*{Acknowledgment}\label{sec:ack}

The authors thank Bernd Sturmfels for suggesting the collaboration. They also thank Elizabeth Baldwin and Paul Klemperer for stimulating discussions and Kazuo Murota for enlightening them the intricate and interesting properties of the $M$-convex functions. Bo Lin is supported by the Max-Planck Institute of Mathematics in the Sciences (Leipzig, Germany) and the 2017 summer grant for doctoral students provided by the Graduate Division of University of California, Berkeley. Ngoc Mai Tran is supported by the Bonn Junior Fellowship of the Hausdorff Center for Mathematics. 

\nocite{*}
\bibliographystyle{alpha}

\begin{thebibliography}{MUWY16}
	
	\bibitem[ABD10]{Ardila10}
	Federico Ardila, Carolina Benedetti, and Jeffrey Doker.
	\newblock Matroid polytopes and their volumes.
	\newblock {\em Discrete Comput. Geom.}, 43(4):841--854, 2010.
	
	\bibitem[ABGJ15]{allamigeon2015tropicalizing}
	Xavier Allamigeon, Pascal Benchimol, St{\'e}phane Gaubert, and Michael Joswig.
	\newblock Tropicalizing the simplex algorithm.
	\newblock {\em SIAM Journal on Discrete Mathematics}, 29(2):751--795, 2015.
	
	\bibitem[AD09]{ardila2009tropical}
	Federico Ardila and Mike Develin.
	\newblock Tropical hyperplane arrangements and oriented matroids.
	\newblock {\em Mathematische Zeitschrift}, 262(4):795--816, 2009.
	
	\bibitem[BGK16]{BaldwinSlides}
	Elizabeth Baldwin, Paul Goldberg, and Paul Klemperer.
	\newblock Tropical intersections and equilibrium (day 2 slides).
	\newblock Hausdorff School on Tropical Geometry and Economics, 2016.
	
	\bibitem[BK15]{Baldwin13}
	Elizabeth Baldwin and Paul Klemperer.
	\newblock Understanding preferences: “demand types”, and the existence of
	equilibrium with indivisibilities.
	\newblock {\em Preprint
		http://www.nuff.ox.ac.uk/users/klemperer/demandtypes.pdf}, 2015.
	
	\bibitem[BML87]{blind1987puzzles}
	Roswitha Blind and Peter Mani-Levitska.
	\newblock Puzzles and polytope isomorphisms.
	\newblock {\em Aequationes Mathematicae}, 34(2-3):287--297, 1987.
	
	\bibitem[CL15]{castillo2015ehrhart}
	Federico Castillo and Fu~Liu.
	\newblock {E}hrhart positivity for generalized permutohedra.
	\newblock In {\em 27th Formal Power Series and Algebraic Combinatorics}, 2015.
	
	\bibitem[CT16]{crowell2016tropical}
	Robert~Alexander Crowell and Ngoc~Mai Tran.
	\newblock Tropical geometry and mechanism design.
	\newblock {\em arXiv preprint arXiv:1606.04880}, 2016.
	
	\bibitem[DF13]{Draisma13}
	Jan Draisma and Bart Frenk.
	\newblock Tropically unirational varieties.
	\newblock In {\em Algebraic and combinatorial aspects of tropical geometry},
	volume 589 of {\em Contemp. Math.}, pages 109--123. Amer. Math. Soc.,
	Providence, RI, 2013.
	
	\bibitem[DLRS10]{detriangulations}
	Jes\'{u}s~A. De~Loera, J\"{o}rg Rambau, and Francisco Santos.
	\newblock {\em Triangulations}, volume~25 of {\em Algorithms and Computation in
		Mathematics}.
	\newblock Springer-Verlag, Berlin, 2010.
	\newblock Structures for algorithms and applications.
	
	\bibitem[Dok11]{doker2011geometry}
	Jeffrey~Samuel Doker.
	\newblock {\em Geometry of generalized permutohedra}.
	\newblock PhD thesis, University of California, Berkeley, 2011.
	
	\bibitem[DS04]{develin2004tropical}
	Mike Develin and Bernd Sturmfels.
	\newblock Tropical convexity.
	\newblock {\em Doc. Math}, 9(1-27):7--8, 2004.
	
	\bibitem[EFG16]{emiris2016efficient}
	Ioannis~Z Emiris, Vissarion Fisikopoulos, and Bernd G{\"a}rtner.
	\newblock Efficient edge-skeleton computation for polytopes defined by oracles.
	\newblock {\em Journal of symbolic computation}, 73:139--152, 2016.
	
	\bibitem[FKPP14]{frank2014characterizing}
	Andr{\'a}s Frank, Tam{\'a}s Kir{\'a}ly, J{\'u}lia Pap, and David Pritchard.
	\newblock Characterizing and recognizing generalized polymatroids.
	\newblock {\em Mathematical Programming}, 146(1-2):245--273, 2014.
	
	\bibitem[FR15]{fink2015stiefel}
	Alex Fink and Felipe Rinc\'on.
	\newblock {S}tiefel tropical linear spaces.
	\newblock {\em J. Combin. Theory Ser. A}, 135:291--331, 2015.
	
	\bibitem[Fuj05]{Fujishige05}
	Satoru Fujishige.
	\newblock {\em Submodular functions and optimization}, volume~58 of {\em Annals
		of Discrete Mathematics}.
	\newblock Elsevier B. V., Amsterdam, second edition, 2005.
	
	\bibitem[Fuk04]{fukuda2004zonotope}
	Komei Fukuda.
	\newblock From the zonotope construction to the {M}inkowski addition of convex
	polytopes.
	\newblock {\em Journal of Symbolic Computation}, 38(4):1261--1272, 2004.
	
	\bibitem[FW05]{fukuda2005computing}
	Komei Fukuda and Christophe Weibel.
	\newblock Computing all faces of the {M}inkowski sum of {V}-polytopes.
	\newblock In {\em Proceedings of the 17th Canadian Conference on Computational
		geometry}, number ROSO-CONF-2005-001, pages 253--256, 2005.
	
	\bibitem[Gao01]{gao2001absolute}
	Shuhong Gao.
	\newblock Absolute irreducibility of polynomials via {N}ewton polytopes.
	\newblock {\em J. Algebra}, 237(2):501--520, 2001.
	
	\bibitem[GH99]{gritzmann1999algorithmic}
	Peter Gritzmann and Alexander Hufnagel.
	\newblock On the algorithmic complexity of minkowski's reconstruction theorem.
	\newblock {\em Journal of the London Mathematical Society}, 59(3):1081--1100,
	1999.
	
	\bibitem[GJ00]{polymake}
	Ewgenij Gawrilow and Michael Joswig.
	\newblock {\tt polymake}: a framework for analyzing convex polytopes.
	\newblock In {\em Polytopes---combinatorics and computation ({O}berwolfach,
		1997)}, volume~29 of {\em DMV Sem.}, pages 43--73. Birkh\"auser, Basel, 2000.
	
	\bibitem[GL01]{gao2001decomposition}
	Shuhong Gao and Alan~G.B. Lauder.
	\newblock Decomposition of polytopes and polynomials.
	\newblock {\em Discrete \& Computational Geometry}, 26(1):89--104, 2001.
	
	\bibitem[GM07]{grigg2007elementary}
	Nathan Grigg and Nathan Manwaring.
	\newblock An elementary proof of the fundamental theorem of tropical algebra.
	\newblock {\em arXiv preprint arXiv:0707.2591}, 2007.
	
	\bibitem[Gri07]{grigg2007factorization}
	Nathan Grigg.
	\newblock {\em Factorization of Tropical Polynomials}.
	\newblock PhD thesis, Brigham Young University, 2007.
	
	\bibitem[Gr{\"u}67]{Grunbaum67}
	Branko Gr{\"u}nbaum.
	\newblock {\em Convex polytopes}.
	\newblock Pure and applied mathematics. Interscience, 1967.
	
	\bibitem[GS93]{gritzmann1993minkowski}
	Peter Gritzmann and Bernd Sturmfels.
	\newblock {M}inkowski addition of polytopes: computational complexity and
	applications to {G}r{\"o}bner bases.
	\newblock {\em SIAM Journal on Discrete Mathematics}, 6(2):246--269, 1993.
	
	\bibitem[HRS00]{huber2000cayley}
	Birkett Huber, J{\"o}rg Rambau, and Francisco Santos.
	\newblock The {C}ayley trick, lifting subdivisions and the {B}ohne-{D}ress
	theorem on zonotopal tilings.
	\newblock {\em Journal of the European Mathematical Society}, 2(2):179--198,
	2000.
	
	\bibitem[Izh08]{izhakian2008tropical}
	Zur Izhakian.
	\newblock Tropical algebraic sets, ideals and an algebraic {N}ullstellensatz.
	\newblock {\em International Journal of Algebra and Computation},
	18(06):1067--1098, 2008.
	
	\bibitem[JKK02]{joswig2002thek}
	Michael Joswig, Volker Kaibel, and Friederike K{\"o}rner.
	\newblock On thek-systems of a simple polytope.
	\newblock {\em Israel Journal of Mathematics}, 129(1):109--117, 2002.
	
	\bibitem[JL16]{joswig2016weighted}
	Michael Joswig and Georg Loho.
	\newblock Weighted digraphs and tropical cones.
	\newblock {\em Linear Algebra and its Applications}, 501:304--343, 2016.
	
	\bibitem[Jos16]{joswig2016cayley}
	Michael Joswig.
	\newblock The {C}ayley trick for tropical hypersurfaces with a view toward
	{R}icardian economics.
	\newblock {\em arXiv preprint arXiv:1606.09165}, 2016.
	
	\bibitem[Kal88]{kalai1988simple}
	Gil Kalai.
	\newblock A simple way to tell a simple polytope from its graph.
	\newblock {\em Journal of combinatorial theory, Series A}, 49(2):381--383,
	1988.
	
	\bibitem[KLT15]{Korhonen15}
	Risto Korhonen, Ilpo Laine, and Kazuya Tohge.
	\newblock {\em Tropical value distribution theory and ultra-discrete
		equations}.
	\newblock World Scientific Publishing Co. Pte. Ltd., Hackensack, NJ, 2015.
	
	\bibitem[KR05]{kim2005factorization}
	Ki~Hang Kim and Fred~William Roush.
	\newblock Factorization of polynomials in one variable over the tropical
	semiring.
	\newblock {\em arXiv preprint math/0501167}, 2005.
	
	\bibitem[MM19]{martinez2019existence}
	Yves Martinez-Maure.
	\newblock Existence and uniqueness theorem for a 3-dimensional polytope of
	r\^{3} with prescribed directions and perimeters of the facets.
	\newblock 2019.
	
	\bibitem[MPS{\etalchar{+}}09]{morton2009convex}
	Jason Morton, Lior Pachter, Anne Shiu, Bernd Sturmfels, and Oliver Wienand.
	\newblock Convex rank tests and semigraphoids.
	\newblock {\em SIAM Journal on Discrete Mathematics}, 23(3):1117--1134, 2009.
	
	\bibitem[MS91]{mount1991combinatorial}
	David~M. Mount and Ruth Silverman.
	\newblock Combinatorial and computational aspects of {M}inkowski
	decompositions.
	\newblock In {\em Vision geometry ({H}oboken, {NJ}, 1989)}, volume 119 of {\em
		Contemp. Math.}, pages 107--124. Amer. Math. Soc., Providence, RI, 1991.
	
	\bibitem[MS15]{Maclagan15}
	Diane Maclagan and Bernd Sturmfels.
	\newblock {\em Introduction to tropical geometry}, volume 161 of {\em Graduate
		Studies in Mathematics}.
	\newblock American Mathematical Society, Providence, RI, 2015.
	
	\bibitem[Mur03]{Murota03}
	Kazuo Murota.
	\newblock {\em Discrete convex analysis}.
	\newblock SIAM Monographs on Discrete Mathematics and Applications. Society for
	Industrial and Applied Mathematics (SIAM), Philadelphia, PA, 2003.
	
	\bibitem[MUWY16]{mohammadi2016generalized}
	Fatemeh Mohammadi, Caroline Uhler, Charles Wang, and Josephine Yu.
	\newblock Generalized permutohedra from probabilistic graphical models.
	\newblock {\em arXiv preprint arXiv:1606.01814}, 2016.
	
	\bibitem[MV99]{Michiels1999}
	Tom Michiels and Jan Verschelde.
	\newblock Enumerating regular mixed-cell configurations.
	\newblock {\em Discrete Comput. Geom.}, 21(4):569--579, 1999.
	
	\bibitem[POC13]{postnikov2013poset}
	Alexander Postnikov, Suho Oh, and Dorian Croitoru.
	\newblock Poset vectors and generalized permutohedra.
	\newblock {\em Discrete Mathematics \& Theoretical Computer Science}, 2013.
	
	\bibitem[Pos09]{Postnikov09}
	Alexander Postnikov.
	\newblock Permutohedra, associahedra, and beyond.
	\newblock {\em Int. Math. Res. Not. IMRN}, (6):1026--1106, 2009.
	
	\bibitem[PRW08]{postnikov2008faces}
	Alex Postnikov, Victor Reiner, and Lauren Williams.
	\newblock Faces of generalized permutohedra.
	\newblock {\em Doc. Math}, 13(51):207--273, 2008.
	
	\bibitem[Sch93]{Schneider93}
	Rolf Schneider.
	\newblock {\em Convex Bodies: The Brunn-Minkowski Theory}.
	\newblock Encyclopedia of Mathematics and its Applications. Cambridge
	University Press, 1993.
	
	\bibitem[She63]{Shephard63}
	Geoffrey~C. Shephard.
	\newblock Decomposable convex polyhedra.
	\newblock {\em Mathematika}, 10:89--95, 1963.
	
	\bibitem[Shi15]{shiozawa2015international}
	Yoshinori Shiozawa.
	\newblock International trade theory and exotic algebra.
	\newblock {\em Evolutionary and Institutional Economics Review},
	12(1):177--212, 2015.
	
	\bibitem[SS09]{Speyer09}
	David Speyer and Bernd Sturmfels.
	\newblock Tropical mathematics.
	\newblock {\em Math. Mag.}, 82(3):163--173, 2009.
	
	\bibitem[Stu94]{Sturmfels94}
	Bernd Sturmfels.
	\newblock On the {N}ewton polytope of the resultant.
	\newblock {\em J. Algebraic Combin.}, 3(2):207--236, 1994.
	
	\bibitem[Tiw08]{Tiwary2008}
	Hans~Raj Tiwary.
	\newblock On the hardness of computing intersection, union and {M}inkowski sum
	of polytopes.
	\newblock {\em Discrete Comput. Geom.}, 40(3):469--479, 2008.
	
	\bibitem[Tsa12]{tsai2012working}
	Yen-Lung Tsai.
	\newblock Working with tropical meromorphic functions of one variable.
	\newblock {\em Taiwanese Journal of Mathematics}, 16(2):pp--691, 2012.
	
	\bibitem[TY15]{Tran15}
	Ngoc~Mai Tran and Josephine Yu.
	\newblock Product-mix auctions and tropical geometry.
	\newblock {\em arXiv preprint arXiv:1505.05737}, 2015.
	
	\bibitem[Ver16]{Verovsek16}
	Sara~Kalisnik Verovsek.
	\newblock Tropical coordinates on the space of persistence barcodes.
	\newblock {\em arXiv preprint arXiv:1604.00113}, 2016.
	
	\bibitem[Wei07]{weibel2007minkowski}
	Christophe Weibel.
	\newblock {\em {M}inkowski sums of polytopes}.
	\newblock PhD thesis, EPFL, 2007.
	
	\bibitem[Zie95]{ziegler1995lectures}
	G{\"u}nter~M Ziegler.
	\newblock {\em Lectures on polytopes}, volume 152 of {\em Graduate Texts in
		Mathematics}.
	\newblock Springer-Verlag, New York, 1995.
	
	\bibitem[ZNL18]{zhang2018tropical}
	Liwen Zhang, Gregory Naitzat, and Lek-Heng Lim.
	\newblock Tropical geometry of deep neural networks.
	\newblock {\em arXiv preprint arXiv:1805.07091}, 2018.
	
\end{thebibliography}
\newcommand{\etalchar}[1]{$^{#1}$}

\end{document}